\numberwithin{equation}{section}
\numberwithin{theorem}{section}
\numberwithin{lemma}{section}
\numberwithin{remark}{section}
\newtheorem{ass}{Assumption}
\date{}
\begin{document}

\title{Difference methods for time discretization of
stochastic wave equation}

%\titlerunning{Short form of title}        % if too long for running head

\author{Xing Liu
}

%\authorrunning{Short form of author list} % if too long for running head

\institute{
Xing Liu \at
School of Mathematics and Economics, Bigdata Modeling and Intelligent Computing research institute, Hubei University of Education, Wuhan 430205, People's Republic of China. \email{2718826413@qq.com}
             % Tel.: +123-45-678910\\
%              Fax: +123-45-678910\\
             % \email{wsqlzu@gmail.com}
 %  \emph{Present address:} of F. Author  %  if needed
%\and
%Weihua Deng\at School of Mathematics and Statistics, Gansu Key Laboratory of Applied Mathematics and Complex
%Systems, Lanzhou University, Lanzhou 730000, People's Republic of China.
%\email{dengwh@lzu.edu.cn}
}

\maketitle

\begin{abstract}	
The time discretization of stochastic spectral fractional wave equation is studied by using the difference methods. Firstly, we exploit rectangle formula to get a low order time discretization, whose the strong convergence order is smaller than $1$ in the sense of mean-squared $L^2$-norm. Meanwhile, by modifying the low order method with trapezoidal rule, the convergence rate is improved at expenses of requiring some extra temporal regularity to the solution. The modified scheme has superlinear convergence rate under the mean-squared $L^2$-norm. Several numerical experiments are provided to confirm the theoretical error estimates.
\\
\\
\keywords{time discretization; difference methods; trapezoidal rule; superlinear convergence rate}
%\noindent{\bf AMS} 26A33, 35R11, 65M60, 65M12.

% \PACS{PACS code1 \and PACS code2 \and more}
% \subclass{MSC code1 \and MSC code2 \and more}
\subclass{26A33 \and 65M60 \and 65L20 \and65C30}
\end{abstract}

\section{Introduction}
\label{sec:into}
Nonlocal operators are successful in trying to explain phenomena and fit data in complex systems. Thus, fractional operators become of high interest in modeling the wave propagations (e.g., viscous damping in the seismic isolation of buildings, cardiac electrical propagation, and seismic wave propagation \cite{Bueno,ChenHolm,MeerschaertSchilling,Szabo}). The work presented in this manuscript is based on space-fractional operator on a bounded domain. Moreover, we are also concerned with the external noises that possibly affect the wave propagation. With the above introduction, the model we discuss in this paper is the stochastic wave equation (SWE)
\begin{equation}\label{eq:1.05}
\left\{
\begin{array}{ll}
\frac{\mathrm{d} \dot{u}(x,t)}{\mathrm{d} t}
=-(-\mathrm{\Delta})^\alpha u(x,t)+f\left(u(x,t)\right)+\dot{B}_H(x,t) \quad \mathrm{in} \ D\times(0,T],\\[1.5mm]
u(x,0)=u_0, \  \dot{u}(x,0)=v_0  %(\partial u(x,t)/\partial t)|_{t=0}=v_0
\quad  \mathrm{in}\ D,\\[1.5mm]
u(x,t)=0 \quad \mathrm{in}\ \partial D\times(0,T],
\end{array}
\right.
\end{equation}
where $\dot{u}(x,t)$ is the first order time derivative of $u(x,t)$, $d/dt$ means the partial derivative with respect to $t$, $f$ is the source term, $D\subset \mathbb{R}^d\ (d=1,2,3)$, and $\dot{B}_H(x,t)$ is the infinite dimensional space-time fractional Gaussian noise, $(-\mathrm{\Delta})^\alpha$ is spectral fractional Laplacian \cite{Nochetto, Song} with $0<\alpha\le1$ and $\frac{1}{2}<H<1$. Let $-\mathrm{\Delta}$ be Laplacian on a bounded domain $D$. If $\left\{\lambda_i,\phi_i(x)\right\}^\infty_{i=1}$ is the eigenpairs of $-\mathrm{\Delta}$, then
\begin{equation}\label{eq:1.06}
\left\{
\begin{array}{ll}
-\mathrm{\Delta}\phi_i(x)=\lambda_i\phi_i(x),\quad  \mathrm{in}\ D,\\
~~~~~\phi_i(x)=0, ~~~~~~~~\quad \mathrm{on}\ \partial D,
\end{array}
\right.
\end{equation}
and
\begin{equation}\label{eq:1.07}
\left\{
\begin{array}{ll}
(-\mathrm{\Delta})^\alpha\phi_i(x)=\lambda^\alpha_i\phi_i(x),\quad  \mathrm{in}\ D,\\
~~~~~~~~~\phi_i(x)=0, ~~~~~~~~~\quad \mathrm{on}\ \partial D.
\end{array}
\right.
\end{equation}

Generally, the exact solution intended for the realistic application is unknown for Eq. \eqref{eq:1.05}. Thus, finding an approximation of Eq. \eqref{eq:1.05} became of high interest in mathematical computing. Many numerical schemes have been studied to solve SWE with additive or multiplicative noise. For instance, the spectral Galerkin method has been proposed for the spatial discretization of SWE driven by additive noise in \cite{LiuDeng,WangGan}. The authors of \cite{CaoHong,Cohen,Kovacs,LiWang} proposed a finite element element technique to compute the solution of SWE with additive noise. In a series of works on numerical analysis of SWE forced by multiplicative noise, the finite difference method and finite element method has been used respectively to obtain space approximation in \cite{Anton} and \cite{CohenQuer}. %the fully discrete approximation have been presented.
Popularly, the temporal approximation of SWE has been designed by mild solution formulation with the boundary condition (e.g., stochastic
trigonometric method \cite{Anton,Cohen,CohenQuer}, exponential time integrators \cite{WangGan}, rational approximation \cite{Kovacs,KovacsLar}). In \cite{LiuDeng}, a superlinear convergence rate was obtained in time by modifying stochastic trigonometric method when $\dot{u}(x,t)$ is bounded in the sense of mean-squared $L^2$-norm.

Differently from the methods mentioned above, we will provide here difference methods to get time approximation of problem \eqref{eq:1.05}. To dealt with the non H\"older continuity of $\dot{u}(x,t)$ in some cases \cite{Liu}, the system \eqref{eq:1.05} is firstly transformed into an equivalent form by using Ornstein-Uhlenbeck process, i.e.,
\begin{equation*}
\left\{
\begin{array}{ll}
\frac{\mathrm{d} \dot{z}(x,t)}{\mathrm{d} t}
=-(-\mathrm{\Delta})^\alpha z(x,t)+f\left(u(x,t)\right)\quad \mathrm{in} \ D\times(0,T],\\[1.5mm]
z(x,0)=u_0, \  \dot{z}(x,0)=v_0
\quad  \mathrm{in}\ D,\\[1.5mm]
z(x,t)=0 \quad \mathrm{in}\ \partial D\times(0,T],
\end{array}
\right.
\end{equation*}
where
\begin{equation*}
u(x,t)=z(x,t)+\int^{t}_0A^{-\frac{\alpha}{2}}\sin\left((-\mathrm{\Delta})^{\frac{\alpha}{2}}(t-s)\right)\mathrm{d}B_H(x,s).
\end{equation*}
Then we design only a low order time discretization of \eqref{eq:1.05} by using above equation and rectangle formula, because the convergence rate of difference method is limited by the smoothness of the solution. Meanwhile, a higher order time discretization is proposed based on trapezoidal rule and Lagrange mean value theorem, when $\dot{u}(x,t)$ is time H\"older continuous in the sense of mean-squared $L^2$-norm.

This paper is organized as follows. In the next section, we introduce some notations and preliminaries, including assumptions and concepts of fractional Brownian motion (fBm). After proving our main regularity estimates, in Section \ref{sec:3}, based on the Dirichlet eigenpairs and concepts of fBm, we present respectively the time regularity estimates of $u(x,t)$ and $z(x,t)$ in the sense of mean-squared $L^2$-norm. In Section \ref{sec:4}, we propose two strategies to obtain the time-discretization with order of convergence depending on the time regularity of $u(t)$. And stability and convergence analysis of two shcemes is given. The numerical experiments are performed in Section \ref{sec:6}. We end the paper with some discussions in Section \ref{sec:7}.

\section{Notations and Preliminaries} \label{sec:2}

In this section, we define functional spaces and gather preliminary results on the Dirichlet eigenpairs and fBm, which are commonly used in the rest of this paper.

Let $U=L^2(D;\mathbb{R})$ be a real separable Hilbert space. And we will always use $\|\cdot\|$ to indicate the norm in $U$. We define the unbounded linear operator $A^\nu$ by $A^\nu u=\left(-\mathrm{\Delta}\right)^\nu u$ on the domain
\begin{equation*}
\mathrm{dom}\left(A^{\nu}\right)=\left\{ A^\nu u\in U:u(x)=0,\ x\in \partial D \right\}.
\end{equation*}
There exists an orthonormal basis of $U$ consisting
of eigenfunctions $\{\phi_{i}(x)\}^\infty_{i=1}$ of Laplacian under homogeneous Dirichlet boundary condition. Then
\begin{equation*}
u=\sum^\infty_{i=1}\left\langle u,\phi_{i}(x)\right\rangle\phi_{i}(x)
\end{equation*}
and
\begin{equation*}
A^{\frac{\nu}{2}}u=\sum^\infty_{i=1}\lambda^{\frac{\nu}{2}}_{i}\left\langle u,\phi_{i}(x)\right\rangle\phi_{i}(x),
\end{equation*}
where $\langle\cdot,\cdot\rangle$ denotes the $U$ inner product, and $\lambda^{\frac{\nu}{2}}_{i}$, $i=1,2,\dots,$ are the eigenvalues of spectral fractional Laplacian.
Moreover, we define the Hilbert space $\dot{U}^\nu=\mathrm{dom}\left(A^{\frac{\nu}{2}}\right)$
equipped with the inner product
\begin{equation*}
\left\langle u, v\right\rangle_{\nu}=\sum^\infty_{i=1}\lambda^{\frac{\nu}{2}}_{i}\left\langle u,\phi_{i}(x)\right\rangle\times\lambda^{\frac{\nu}{2}}_{i}\left\langle v,\phi_{i}(x)\right\rangle
\end{equation*}
and norm
\begin{equation*}
\|u\|_\nu=\left(\sum^\infty_{i=1}\lambda^{\nu}_{i}\left\langle u,\phi_{i}(x)\right\rangle^2\right)^{\frac{1}{2}}.
\end{equation*}
In particular, $\dot{U}^0=U$.

\begin{lemma} \label{le:01}
Let $\Omega$ denote a bounded domain in $\mathbb{R}^d$, $d\in\{1,2,3\}$. Let $\lambda_{i}$ be the i-th eigenvalue of the Dirichlet homogeneous boundary problem for the Laplacian operator $-\mathrm{\Delta}$ in $\Omega$. %Let $|\Omega|$ be the volume of $\Omega$, the
Then
\begin{equation*}
C_0i^{\frac{2}{d}}\le\lambda_{i}\le C_1i^{\frac{2}{d}},
\end{equation*}
where $i\in\mathbb{N}$, and the constants $C_0$ and $C_1$ are independent of $i$.
\end{lemma}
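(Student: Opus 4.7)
The statement is the classical Weyl-type two-sided bound for Dirichlet eigenvalues of the Laplacian on a bounded domain, so the plan is to prove it by \emph{Dirichlet--Neumann bracketing} against cubes on which the eigenvalues are explicit. The min--max principle and domain monotonicity are the main tools.

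First I would establish the upper bound $\lambda_i\le C_1 i^{2/d}$. Since $\Omega$ is open, I can inscribe a cube $Q_r=(0,r)^d\subset\Omega$. By Dirichlet domain monotonicity (an immediate consequence of the min--max characterization, because $H^1_0(Q_r)$ embeds into $H^1_0(\Omega)$ by extension by zero), the Dirichlet eigenvalues satisfy $\lambda_i(\Omega)\le\lambda_i^D(Q_r)$. The Dirichlet spectrum of $Q_r$ is explicit, $\pi^2 r^{-2}(k_1^2+\cdots+k_d^2)$ with $k_j\in\mathbb{N}$, so counting eigenvalues below $\lambda$ reduces to counting lattice points $k\in\mathbb{N}^d$ inside the ball of radius $r\sqrt{\lambda}/\pi$. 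A standard volume estimate gives $N^D(\lambda,Q_r)\le c_d\,r^d\lambda^{d/2}$, and inverting this yields $\lambda_i^D(Q_r)\le C_1 i^{2/d}$ with $C_1$ depending only on $r$ and $d$, hence only on $\Omega$ and $d$.

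For the lower bound $\lambda_i\ge C_0 i^{2/d}$, I would choose a cube $Q_R=(0,R)^d\supset\Omega$ and apply Dirichlet--Neumann bracketing: extending any function in $H^1_0(\Omega)$ by zero to $Q_R$ produces an admissible test function for the Neumann form on $Q_R$, so the min--max principle gives $\lambda_i^N(Q_R)\le\lambda_i^D(\Omega)$. The Neumann spectrum of $Q_R$ is also explicit, $\pi^2 R^{-2}(k_1^2+\cdots+k_d^2)$ with $k_j\in\mathbb{N}\cup\{0\}$, and the same lattice-point counting (now with the corresponding lower bound on the number of lattice points in a ball) gives $\lambda_i^N(Q_R)\ge C_0 i^{2/d}$, with $C_0$ depending only on $R$ and $d$.

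The main technical nuisance is the lattice-counting step, which has to be made uniform in $\lambda$ (not merely asymptotic) to yield honest constants $C_0,C_1$ valid for \emph{every} $i\in\mathbb{N}$, rather than only $i\to\infty$. The cleanest way is to work directly with the cardinality of $\{k\in\mathbb{N}^d:|k|^2\le M\}$, sandwiching it between the volumes of balls of radius $\sqrt{M}$ and $\sqrt{M}+\sqrt{d}$ via unit-cube packings, which loses constants but preserves the $M^{d/2}$ growth; absorbing the lower-order corrections into the constants finishes the proof. Since the result is classical Weyl asymptotics, an alternative (and shorter) presentation would simply cite Weyl's law $N(\lambda)\sim C_d|\Omega|\lambda^{d/2}$ and invert it, noting that the non-asymptotic two-sided bound follows from the bracketing argument above.
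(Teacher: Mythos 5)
The paper offers no proof of this lemma at all --- it simply cites \cite{Laptev,LiYau,Strauss} --- so there is nothing to compare line by line; your Dirichlet--Neumann bracketing argument is essentially the standard proof one finds in those references (Laptev and Strauss argue via bracketing/Weyl counting; Li--Yau obtain the sharp lower bound by a different, Fourier-analytic Berezin-type argument, which buys an explicit optimal constant that is not needed here). Your structure is sound: domain monotonicity against an inscribed cube for the upper bound, the form-domain inclusion $H^1_0(\Omega)\hookrightarrow H^1(Q_R)$ for the lower bound, explicit cube spectra, and lattice-point counting. One correction you should make before writing this up: you have the direction of the counting-function inversion backwards in both places. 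An \emph{upper} bound $N^D(\lambda,Q_r)\le c\,\lambda^{d/2}$ yields a \emph{lower} bound $\lambda_i^D(Q_r)\gtrsim i^{2/d}$ (since $i\le N(\lambda_i)$), whereas the upper bound $\lambda_i^D(Q_r)\lesssim i^{2/d}$ requires a \emph{lower} bound on the counting function (e.g.\ $N^D(\lambda,Q_r)\ge c'\lambda^{d/2}$ for $\lambda\ge\lambda_1^D(Q_r)$); symmetrically for the Neumann count. Your closing paragraph shows you intend a two-sided lattice estimate, so this is a slip of exposition rather than a conceptual gap, but as literally stated the two implications are false. Also note that $\lambda_1^N(Q_R)=0$, so the Neumann comparison gives nothing for finitely many small $i$; there you must fall back on $\lambda_i^D(\Omega)\ge\lambda_1^D(\Omega)>0$ and enlarge $C_0$ accordingly, which is consistent with your remark about absorbing lower-order corrections into the constants.
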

The detail proof of Lemma \ref{le:01} has been shown in \cite{Laptev,LiYau,Strauss}.

\begin{ass}\label{as:2.1}
The function $f:U\to U$ in (\ref{eq:1.05}) satisfies
\begin{equation*}
\|f(u)-f(v)\| \lesssim \|u-v\| ~ {\rm for ~any} ~u,v\in U,
\end{equation*}
and
\begin{equation*}
\|A^{\frac{\nu}{2}} f(u)\|\lesssim 1+\|A^{\frac{\nu}{2}} u\|  ~ {\rm for } ~ u\in \dot{U}^\nu~ {\rm with } ~ \nu\in[0,\gamma],
\end{equation*}
where $\gamma$ is given in Lemma \ref{le:1}.
\end{ass}

Next, we recall fBm and corresponding stochastic integral. Let $\beta_H(t)$ be the two-sided one-dimensional fBm, then
$\mathrm{E}[\beta_H(t)]=0$ and
\begin{equation*}
\mathrm{E}[\beta_H(t)\beta_H(s)]=\frac{1}{2}\left(|t|^{2H}+|s|^{2H}-|t-s|^{2H}\right).
\end{equation*}
For $H\in\left(\frac{1}{2},1\right)$ and $\int^t_sf(r)(r-s)^{H-\frac{3}{2}}\left(\frac{s}{r}\right)^{\frac{1}{2}-H}\mathrm{d}r$ belongs to $L^2\left([0,T],\mathbb{R}\right)$, an expression of the Wiener integral with respect to fBm is introduced as \cite{AlosMazet,WangZeng}
\begin{equation}\label{eq:2.1}
\int^t_0f(s)\mathrm{d}\beta_H(s)=C_H\int^t_0\int^t_s\left(H-\frac{1}{2}\right)f(r)(r-s)^{H-\frac{3}{2}}\left(\frac{s}{r}\right)^{\frac{1}{2}-H}\mathrm{d}r\mathrm{d}\beta(s),\ s,t\in[0,T],
\end{equation}
where $\beta(t)$ is standard Brownian motion and
\begin{equation*}
C_H=\left(\frac{2H\times\Gamma(\frac{3}{2}-H)}{\Gamma(H+\frac{1}{2})\Gamma(2-2H)}\right)^{\frac{1}{2}}.
\end{equation*}
The following Lemma shows the integral theory of fBm.
\begin{lemma} [\cite{Mishura}]\label{le:02}
For $f, g\in L^2(\mathbb{R};\mathbb{R})\cap L^1(\mathbb{R};\mathbb{R})$, as $\frac{1}{2}<H<1$, we have
\begin{equation*}
\mathrm{E}\left[\int_Rf(s)\mathrm{d}\beta_H(s)\right]=0
\end{equation*}
and
\begin{equation*}
\mathrm{E}\left[\int_Rf(s)\mathrm{d}\beta_H(s)\int_Rg(t)\mathrm{d}\beta_H(t)\right]=H(2H-1)\int_R\int_Rf(s)g(t)|s-t|^{2H-2}\mathrm{d}s\mathrm{d}t.
\end{equation*}
\end{lemma}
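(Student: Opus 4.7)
The plan is to reduce both assertions to properties of the standard Wiener integral via the representation formula \eqref{eq:2.1}. Writing
\begin{equation*}
K_H(r,s) = C_H\Bigl(H-\tfrac{1}{2}\Bigr)(r-s)^{H-\frac{3}{2}}\Bigl(\frac{s}{r}\Bigr)^{\frac{1}{2}-H}\mathbf{1}_{\{0<s<r\}},
\end{equation*}
the representation \eqref{eq:2.1} recasts the fBm integral as
\begin{equation*}
\int_{\mathbb{R}} f(s)\,\mathrm{d}\beta_H(s) = \int_{\mathbb{R}}\Bigl(\int_{\mathbb{R}} K_H(r,s)f(r)\,\mathrm{d}r\Bigr)\,\mathrm{d}\beta(s),
\end{equation*}
(understood in the two-sided sense for $s \in \mathbb{R}$). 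The very first task is to check that the inner integrand, as a function of $s$, lies in $L^2(\mathbb{R})$. This uses the $L^1 \cap L^2$ hypothesis on $f$ together with the integrability of $(r-s)^{H-3/2}$ near the diagonal, which holds precisely because $H>\tfrac{1}{2}$.

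Once this is established, the first assertion is immediate: the Wiener integral $\int \Phi(s)\,\mathrm{d}\beta(s)$ against standard Brownian motion has mean zero whenever $\Phi \in L^2(\mathbb{R})$, so $\mathrm{E}\left[\int f\,\mathrm{d}\beta_H\right]=0$ follows without further work.

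For the covariance identity I would apply the It\^o isometry to the transformed integrals:
\begin{equation*}
\mathrm{E}\Bigl[\int f\,\mathrm{d}\beta_H \int g\,\mathrm{d}\beta_H\Bigr] = \int_{\mathbb{R}}\Bigl(\int K_H(r,s)f(r)\,\mathrm{d}r\Bigr)\Bigl(\int K_H(t,s)g(t)\,\mathrm{d}t\Bigr)\,\mathrm{d}s.
\end{equation*}
A Fubini exchange—whose justification is straightforward since the integrand is nonnegative after taking absolute values—brings the integral over $s$ inside to give
\begin{equation*}
\int_{\mathbb{R}}\int_{\mathbb{R}} f(r)g(t)\Bigl(\int_{\mathbb{R}} K_H(r,s)K_H(t,s)\,\mathrm{d}s\Bigr)\,\mathrm{d}r\,\mathrm{d}t.
\end{equation*}
So the whole matter reduces to the deterministic kernel identity
\begin{equation*}
\int_{\mathbb{R}} K_H(r,s)K_H(t,s)\,\mathrm{d}s = H(2H-1)|r-t|^{2H-2}, \qquad r\neq t.
\end{equation*}

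The main obstacle, and the only genuinely computational part of the proof, is verifying this kernel identity. Assuming without loss of generality $0<t<r$, I would perform the substitution $s = t u$ on the interval $(0,t)$ to turn the left-hand side into an integral of the form
\begin{equation*}
C_H^2\Bigl(H-\tfrac{1}{2}\Bigr)^2 (r t)^{\frac{1}{2}-H}\int_0^1 u^{1-2H}(r-tu)^{H-\frac{3}{2}}(t-tu)^{H-\frac{3}{2}}\,\mathrm{d}u,
\end{equation*}
then factor out the leading singularity and recognise the remaining Euler-type integral as a Beta function $B\bigl(2-2H,\,H-\tfrac{1}{2}\bigr)$ (possibly combined with a hypergeometric simplification specific to $H>\tfrac{1}{2}$). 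The constant $C_H^2$ is calibrated precisely so that, after invoking the reflection identity $\Gamma(z)\Gamma(1-z)=\pi/\sin(\pi z)$ and the recurrence $\Gamma(z+1)=z\Gamma(z)$, the $\Gamma$-factors collapse to the clean prefactor $H(2H-1)$. I would bookkeep these Gamma manipulations carefully, since this is where the specific form of $C_H$ (given just above Lemma \ref{le:02}) is genuinely used; the rest of the proof is soft.
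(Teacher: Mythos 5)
The paper offers no proof of this lemma at all: it is quoted verbatim from the cited monograph of Mishura, so there is nothing internal to compare against. Judged on its own terms, your plan is the standard and correct one: push the fBm integral through the Volterra kernel of \eqref{eq:2.1}, apply the It\^o isometry for the resulting Wiener integral against $\beta$, and reduce everything to the deterministic identity $\int K_H(r,s)K_H(t,s)\,\mathrm{d}s=H(2H-1)|r-t|^{2H-2}$, which (for $0<t<r$, after $s=tu$) is an Euler integral collapsing via ${}_2F_1(a,b;a;z)=(1-z)^{-b}$ and the calibration of $C_H$ to exactly the stated prefactor. The mean-zero claim is indeed immediate once square-integrability of the transformed integrand is known, and that square-integrability for $f\in L^1\cap L^2$ is most cleanly obtained \emph{after} the kernel identity, via Hardy--Littlewood--Sobolev applied to $\int\!\!\int|f(s)||f(t)||s-t|^{2H-2}\,\mathrm{d}s\,\mathrm{d}t$ (note $L^1\cap L^2\subset L^{1/H}$ for $\tfrac12<H<1$); your stated reason, integrability of $(r-s)^{H-3/2}$ near the diagonal, is not by itself sufficient.

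Two points need repair. First, the representation \eqref{eq:2.1} is the one-sided (Molchan--Golosov type) kernel on $[0,T]$: the factor $(s/r)^{\frac12-H}$ forces $0<s<r$, so your argument as written proves the covariance formula only for $f,g$ supported on the positive half-line, not for general $f,g$ on $\mathbb{R}$ as the lemma claims; for the two-sided statement you would need the Mandelbrot--Van Ness kernel $(t-s)_+^{H-\frac12}-(-s)_+^{H-\frac12}$ instead (this restriction is harmless for the paper's applications, which only use integrals over $[0,T]$, but it is a gap relative to the statement). Second, your bookkeeping after the substitution is off: $(s/r)^{\frac12-H}(s/t)^{\frac12-H}=s^{1-2H}(rt)^{H-\frac12}$, so the prefactor should carry $(rt)^{H-\frac12}$ (together with $t^{2-2H}$ from $s^{1-2H}=(tu)^{1-2H}$ and the Jacobian), not $(rt)^{\frac12-H}$; these powers of $r$ and $t$ must cancel exactly for the answer to depend only on $r-t$, so they are worth tracking carefully even in a sketch.
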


Let driven stochastic process $B_{H}(x,t)$ be a cylindrical fBm with respect to the normal filtration $\{\mathcal{F}_t\}_{t\in[0,T]}$. The
infinite dimensional space-time Gaussian process can be represented by the formal series
\begin{equation*}
B_{H}(x,t)=\sum^\infty_{i=1}\sigma_{i}\beta^{i}_{H}(t)\phi_{i}(x),
\end{equation*}
where $|\sigma_{i}|\lesssim\lambda_i^{-\rho}$ ($\rho\ge0$, $\lambda_i$ is given in Lemma \ref{le:01}), $\beta^{i}_{H}(t)$, $i=1,2,\dots,$ are mutually independent real-valued fractional Brownian motions with $\frac{1}{2}<H<1$, and $\{\phi_{i}(x)\}_{i\in\mathbb{N}}$ is an orthonormal basis of $U$.

We define $L^p(D,\dot{U}^\nu)$ to be the separable Hilbert space of $p$-times integrable random variables with norm
\begin{equation*}
\left\|u\right\|_{L^p(D,\dot{U}^\nu)}=\left(\mathrm{E}\left[\left\|u\right\|^p_{\nu}\right]\right)^{\frac{1}{p}}, \quad \nu\ge0.
\end{equation*}

\section{H\"older Continuity of the Solution} \label{sec:3}
To begin with we can give a system of equations by coupling Eq. \eqref{eq:1.05} and $\mathrm{d}u(x,t)=\dot{u}(x,t)\mathrm{d}t$. Combining the system of equations and Ornstein-Uhlenbeck process leads to an equivalent form of Eq. \eqref{eq:1.05}, which will be used to obtain the approximation
of Eq. \eqref{eq:1.05}. Moreover, we study time regularity of the mild solution $u(x,t)$ and $z(x,t)$, respectively.

In the interest of brevity and readability, replace Eq. \eqref{eq:1.05} with the following equation
\begin{equation}\label{eq:3.1}
\left\{
\begin{array}{ll}
\mathrm{d} \dot{u}(t)=-A^\alpha  u(t)\mathrm{d}t+f\left(u(t)\right)\mathrm{d}t+\mathrm{d}B_H(t)& \quad \mathrm{in} \ D\times(0,T],\\[1mm]
u(0)=u_0, \dot{u}(0)=v_0& \quad  \mathrm{in}\ D,\\[1mm]
u(t)=0& \quad \mathrm{in}\ \partial D,
\end{array}
\right.
\end{equation}
where $u(t)=u(x,t)$ and $B_H(t)=B_H(x,t)$. Let $v(t)=\dot{u}(t)$, then we have
\begin{equation*}
\left\{
\begin{array}{ll}
\mathrm{d} u(t)=v(t)\mathrm{d}t\\
\mathrm{d} v(t)=-A^\alpha  u(t)\mathrm{d}t+f\left(u(t)\right)\mathrm{d}t+\mathrm{d}B_H(t)
\end{array}
\right.,
\end{equation*}
which implies
\begin{equation}\label{eq:3.2}
\mathrm{d}X(t)=\Lambda X(t)\mathrm{d}t+
\left[\begin{array}{c}
0\\
f(u(t))
\end{array} \right]\mathrm{d}t+
\left[\begin{array}{c}
0\\
I
\end{array} \right]\mathrm{d}B_H(t),
\end{equation}
where
\begin{equation*}
X(t)=
\left[\begin{array}{c}
u(t)\\
v(t)
\end{array} \right],\qquad
\Lambda=
\left[\begin{array}{cc}
0& I\\
-A^{\alpha}&0
\end{array}\right].
\end{equation*}
Then a formal mild solution $X(t)$ for \eqref{eq:3.2} is given as
\begin{equation}\label{eq:3.3}
X(t)=\mathrm{e}^{\Lambda t}X(0)+\int^t_0\mathrm{e}^{\Lambda (t-s)}
\left[\begin{array}{c}
0\\
f\left(u(s)\right)
\end{array} \right]\mathrm{d}s+\int^t_0\mathrm{e}^{\Lambda (t-s)}\left[\begin{array}{c}
0\\
I
\end{array} \right]\mathrm{d}B_H(s).
\end{equation}
Here, $\mathrm{e}^{\Lambda t}$ can be expressed as
\begin{equation}\label{eq:3.3-1}
\mathrm{e}^{\Lambda t}=
\left[\begin{array}{cc}
\cos\left(A^{\frac{\alpha}{2}}t\right)&A^{-\frac{\alpha}{2}}\sin\left(A^{\frac{\alpha}{2}}t\right)\\[1mm]
-A^{\frac{\alpha}{2}}\sin\left(A^{\frac{\alpha}{2}}t\right)&\cos\left(A^{\frac{\alpha}{2}}t\right)
\end{array}\right].
\end{equation}
Substituting \eqref{eq:3.3-1} into \eqref{eq:3.3}, then we get two components of $X(t)$
\begin{equation}\label{eq:3.4}
\begin{split}
u(t)=&\cos\left(A^{\frac{\alpha}{2}}t\right)u_0+A^{-\frac{\alpha}{2}}\sin\left(A^{\frac{\alpha}{2}}t\right)v_0+\int^t_0A^{-\frac{\alpha}{2}}\sin\left(A^{\frac{\alpha}{2}}(t-s)\right)f\left(u(s)\right)\mathrm{d}s\\[1.5mm]
&+\int^t_0A^{-\frac{\alpha}{2}}\sin\left(A^{\frac{\alpha}{2}}(t-s)\right)\mathrm{d}B_H(s),\\[1.5mm]
v(t)=&-A^{\frac{\alpha}{2}}\sin\left(A^{\frac{\alpha}{2}}t\right)u_0+\cos\left(A^{\frac{\alpha}{2}}t\right)v_0+\int^t_0\cos\left(A^{\frac{\alpha}{2}}(t-s)\right)f\left(u(s)\right)\mathrm{d}s\\[1.5mm]
&+\int^t_0\cos\left(A^{\frac{\alpha}{2}}(t-s)\right)\mathrm{d}B_H(s).
\end{split}
\end{equation}

Firstly, we consider the regularity estimates of stochastic integral in \eqref{eq:3.4}. Let $\gamma>\alpha$ and $s\le t$. Combining Lemma \ref{le:01}, and Eq. \eqref{eq:2.1} leads to
 \begin{equation}\label{eq:3.5-1}
 \begin{split}
&\mathrm{E}\left[\left\|\int^t_s\cos\left(A^{\frac{\alpha}{2}}(t-r)\right)\mathrm{d}B_H(s)\right\|^2\right]\\
&\lesssim H(2H-1)\sum\limits^\infty_{i=1}\lambda_i^{-2\rho}\int^t_s\int^t_s\cos\left(\lambda_i^{\frac{\alpha}{2}}(t-r)\right)\cos\left(\lambda_i^{\frac{\alpha}{2}}(t-r_1)\right)|r-r_1|^{2H-2}\mathrm{d}r\mathrm{d}r_1\\
&\lesssim H(2H-1)\sum\limits^\infty_{i=1}\lambda_i^{-2\rho}\int^t_s\int^t_s|r-r_1|^{2H-2}\mathrm{d}r\mathrm{d}r_1\\
&= 2H(2H-1)\sum\limits^\infty_{i=1}\lambda_i^{-2\rho}\int^t_s\int^t_{r_1}(r-r_1)^{2H-2}\mathrm{d}r\mathrm{d}r_1\\
&= \sum\limits^\infty_{i=1}\lambda_i^{-2\rho}(t-s)^{2H}
\end{split}
 \end{equation}
and
\begin{equation}\label{eq:3.5-0}
 \begin{split}
&\mathrm{E}\left[\left\|\int^s_0\cos\left(A^{\frac{\alpha}{2}}(t-r)\right)\mathrm{d}B_H(r)-\int^s_0\cos\left(A^{\frac{\alpha}{2}}(s-r)\right)\mathrm{d}B_H(r)\right\|^2\right]\\
&\lesssim H(2H-1)\sum^\infty_{i=1}\lambda_i^{-2\rho}\int^s_0\int^s_0\left[\cos\left(\lambda_i^{\frac{\alpha}{2}}(t-r)\right)-\cos\left(\lambda_i^{\frac{\alpha}{2}}(s-r)\right)\right]\\
&~~~~\times\left[\cos\left(\lambda_i^{\frac{\alpha}{2}}(t-r_1)\right)-\cos\left(\lambda_i^{\frac{\alpha}{2}}(s-r_1)\right)\right]|r-r_1|^{2H-2}\mathrm{d}r\mathrm{d}r_1\\
&\lesssim H(2H-1)\sum^\infty_{i=1}\lambda_i^{-2\rho}\int^s_0\int^s_0\left(\lambda_i^{\frac{\alpha}{2}}(t-s)\right)^{2*\min\{\frac{\gamma-\alpha}{\alpha},1\}}|r-r_1|^{2H-2}\mathrm{d}r\mathrm{d}r_1\\
&\lesssim (t-s)^{2*\min\{\frac{\gamma-\alpha}{\alpha},1\}}s^{2H}\sum\limits^\infty_{i=1}i^{\frac{2(\gamma-\alpha-2\rho)}{d}}
 \end{split}
 \end{equation}

Combining Lemma \ref{le:01}, Eqs. \eqref{eq:3.4}-\eqref{eq:3.5-0}, and the Burkh\"older-Davis-Gundy inequality \cite{PratoZab,vanNeerven}, we can get the following regularity results of the mild solution $u(t)$ and $v(t)$.
\begin{lemma}\label{le:1}
Suppose that Assumptions \ref{as:2.1} are satisfied, $\left\|u_0\right\|_{L^p(D,\dot{U}^\gamma)}<\infty$, $\left\|v_0\right\|_{L^p(D,\dot{U}^{\gamma-\alpha})}<\infty$, $\varepsilon>0$, $\gamma=\alpha+2\rho-\frac{d+\varepsilon}{2}$, $\gamma>0$, and $p>1$.
Then there exists a unique mild solution $X(t)$ for \eqref{eq:3.2} and
\begin{equation}\label{eq:3.6}
\left\|u(t)\right\|_{L^p(D,\dot{U}^\gamma)}+\left\|v(t)\right\|_{L^p(D,\dot{U}^{\gamma-\alpha})}\lesssim \frac{t^H}{\varepsilon}+\left\|u_0\right\|_{L^p(D,\dot{U}^\gamma)}+\left\|v_0\right\|_{L^p(D,\dot{U}^{\gamma-\alpha})}.
\end{equation}
Furthermore,

$\mathrm{(i)}$ for $\gamma\le\alpha$,
\begin{equation*}
\left\|u(t)-u(s)\right\|_{L^2(D,U)}\lesssim (t-s)^{\frac{\gamma}{\alpha}}\left(\frac{t^H}{\varepsilon}+\left\|u_0\right\|_{L^2(D,\dot{U}^\gamma)}+\left\|v_0\right\|_{L^2(D,\dot{U}^{\gamma-\alpha})}\right);
\end{equation*}

$\mathrm{(ii)}$ for $\alpha<\gamma\le(1+H)\alpha$,
\begin{equation*}
\left\|v(t)-v(s)\right\|_{L^2(D,U)}\lesssim (t-s)^{\frac{\gamma-\alpha}{\alpha}}\left(\frac{t^H}{\varepsilon}+\left\|u_0\right\|_{L^2(D,\dot{U}^\gamma)}+\left\|v_0\right\|_{L^2(D,\dot{U}^{\gamma-\alpha})}\right);
\end{equation*}

$\mathrm{(iii)}$ for $\gamma>(1+H)\alpha$,
\begin{equation*}
\left\|v(t)-v(s)\right\|_{L^2(D,U)}\lesssim (t-s)^H\left(t^H+\left\|u_0\right\|_{L^2(D,\dot{U}^\gamma)}+\left\|v_0\right\|_{L^2(D,\dot{U}^{\gamma-\alpha})}\right).
\end{equation*}
\end{lemma}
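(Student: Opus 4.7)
The plan is to prove the lemma in three stages that mirror its statement: (a) existence and uniqueness of the mild solution $X(t)$ in (3.4), (b) the global a priori bound (3.6), and (c) the three Hölder estimates (i)--(iii), all derived by direct manipulation of the mild formula (3.4) together with the estimates (3.5-1)--(3.5-0) already proved.

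For (a), I would run a standard Banach fixed-point argument on (3.4), using the Lipschitz part of Assumption \ref{as:2.1} for $f$, the uniform operator bounds $\|\cos(A^{\alpha/2}t)\|_{U\to U}\le 1$ and $\|A^{-\alpha/2}\sin(A^{\alpha/2}t)\|_{U\to U}\le \lambda_1^{-\alpha/2}$, and the $L^p$--$L^2$ equivalence of the Gaussian stochastic integral (or Burkh\"older--Davis--Gundy). Short-time solvability is then globalized by iteration. For (b), apply $A^{\gamma/2}$ to the $u$-equation and $A^{(\gamma-\alpha)/2}$ to the $v$-equation in (3.4). The initial-data terms are controlled directly by $\|u_0\|_\gamma$ and $\|v_0\|_{\gamma-\alpha}$ since the trigonometric operators commute with $A^\nu$ and are bounded; the Duhamel term is handled by the growth part of Assumption \ref{as:2.1} together with Gronwall. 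The stochastic piece is bounded exactly as in (3.5-1) but with the extra factor $\lambda_i^{\gamma-\alpha}$ from the $A^{(\gamma-\alpha)/2}$; by Lemma \ref{le:01} the resulting sum becomes $\sum_i i^{2(\gamma-\alpha-2\rho)/d} = \sum_i i^{-1-\varepsilon/d}=O(1/\varepsilon)$ under the choice $\gamma=\alpha+2\rho-\frac{d+\varepsilon}{2}$, which is exactly the mechanism producing the $t^H/\varepsilon$ prefactor in (3.6).

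For (c) the central elementary ingredient is the interpolation bound
\[
\bigl|\cos(\lambda_i^{\alpha/2}t)-\cos(\lambda_i^{\alpha/2}s)\bigr|+\bigl|\sin(\lambda_i^{\alpha/2}t)-\sin(\lambda_i^{\alpha/2}s)\bigr|\;\lesssim\;\min\!\bigl\{(\lambda_i^{\alpha/2}|t-s|)^\theta,\,1\bigr\},\quad \theta\in[0,1],
\]
which trades $\theta$ powers of $\lambda_i$ for $\theta$ powers of $|t-s|$. In case (i), decompose $u(t)-u(s)$ from (3.4) into an initial-data difference, the Duhamel increment (split into the integral over $[s,t]$ and a phase-change integral over $[0,s]$), and the analogous stochastic increment. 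Taking $\theta=\gamma/\alpha\in(0,1]$ inside the inequality above converts the $A^{\gamma/2}$ regularity already supplied by (3.6) into the Hölder exponent $\gamma/\alpha$ on each piece, while the stochastic piece uses (3.5-1) on $[s,t]$ (giving $(t-s)^{2H}$, subdominant since $\gamma/\alpha\le 1\le H/\text{something}$ -- more precisely the phase-change bound (3.5-0) with exponent $\gamma/\alpha$ already saturates the answer). Case (ii) is identical for $v(t)-v(s)$ with $\theta=(\gamma-\alpha)/\alpha\in(0,H]$; the phase-change term (3.5-0) yields $(t-s)^{(\gamma-\alpha)/\alpha}$ and dominates the $(t-s)^H$ coming from (3.5-1). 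In case (iii), $(\gamma-\alpha)/\alpha>H$, so the phase-change contribution in (3.5-0) saturates (we take $\min\{\cdot,1\}=1$, giving $(t-s)^2$) and the incremental piece (3.5-1) with exponent $(t-s)^{2H}$ becomes the bottleneck, while the deterministic and initial-data terms are shown to be $O(t-s)$ and hence negligible.

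The main obstacle is the bookkeeping in case (iii), where one must verify that the deterministic Duhamel and initial-data contributions really are of order $(t-s)$ (and not worse) so that the overall Hölder exponent is dictated by the stochastic integral, and that the eigenvalue sums arising throughout continue to converge to $O(1/\varepsilon)$ under the borderline choice of $\gamma$. Everything else reduces to routine applications of Lemma \ref{le:01}, Lemma \ref{le:02}, Assumption \ref{as:2.1}, and Gronwall's inequality.
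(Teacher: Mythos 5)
Your proposal follows essentially the same route as the paper: decompose the increments of the mild formula \eqref{eq:3.4} via the triangle inequality, apply the interpolation bounds $|\cos a-\cos b|+|\sin a - \sin b|\lesssim\min\{|a-b|^\theta,1\}$ with $\theta$ chosen to match the available spatial regularity, and control the stochastic pieces by \eqref{eq:3.5-1}--\eqref{eq:3.5-0} together with Lemma \ref{le:01} and the choice $\gamma=\alpha+2\rho-\frac{d+\varepsilon}{2}$ (the paper only writes out case (ii) explicitly and asserts the rest ``similarly,'' so your treatment is if anything more complete). One small correction in case (iii): the initial-data and Duhamel terms should be estimated with $\theta=H$, giving $(t-s)^H\left\|u_0\right\|_{L^2(D,\dot{U}^\gamma)}$, rather than $O(t-s)$, since $\left\|A^{\alpha}u_0\right\|$ is not controlled unless $\gamma\ge2\alpha$; likewise the phase-change exponent $\min\{\frac{\gamma-\alpha}{\alpha},1\}$ in \eqref{eq:3.5-0} need not equal $1$ there, but it always exceeds $H$ when $\gamma>(1+H)\alpha$, so the $(t-s)^{2H}$ term from \eqref{eq:3.5-1} is indeed the bottleneck and your conclusion stands.
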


\begin{proof}
For $\alpha<\gamma\le(1+H)\alpha$, combining the triangle inequality and \eqref{eq:3.4}, we have
\begin{equation*}
\begin{split}
&\left\|v(t)-v(s)\right\|_{L^2(D,U)}\\
&\lesssim\left\|A^{\frac{\alpha}{2}}\left(\sin\left(A^{\frac{\alpha}{2}}t\right)-\sin\left(A^{\frac{\alpha}{2}}s\right)\right)u_0\right\|_{L^2(D,U)}+\left\|\left(\cos\left(A^{\frac{\alpha}{2}}t\right)-\cos\left(A^{\frac{\alpha}{2}}s\right)\right)v_0\right\|_{L^2(D,U)}\\
&~~~~+\left\|\int^t_0\cos\left(A^{\frac{\alpha}{2}}(t-r)\right)f\left(u(r)\right)\mathrm{d}r-\int^s_0\cos\left(A^{\frac{\alpha}{2}}(s-r)\right)f\left(u(r)\right)\mathrm{d}r\right\|_{L^2(D,U)}\\
&~~~~+\left\|\int^t_0\cos\left(A^{\frac{\alpha}{2}}(t-r)\right)\mathrm{d}B_H(r)-\int^s_0\cos\left(A^{\frac{\alpha}{2}}(s-r)\right)\mathrm{d}B_H(r)\right\|_{L^2(D,U)}\\
&\lesssim(t-s)^{\frac{\gamma-\alpha}{\alpha}}\left\|A^{\frac{\gamma}{2}}u_0\right\|_{L^2(D,U)}+(t-s)^{\frac{\gamma-\alpha}{\alpha}}\left\|A^{\frac{\gamma-\alpha}{2}}v_0\right\|_{L^2(D,U)}+I_1+I_2\\
\end{split}.
\end{equation*}
Using Assumptions \ref{as:2.1} leads to
\begin{equation*}
\begin{split}
I_1\lesssim&\left\|\int^t_s\cos\left(A^{\frac{\alpha}{2}}(t-r)\right)f\left(u(r)\right)\mathrm{d}r\right\|_{L^2(D,U)}\\
&+\left\|\int^s_0\left(\cos\left(A^{\frac{\alpha}{2}}(t-r)\right)-\cos\left(A^{\frac{\alpha}{2}}(s-r)\right)\right)f\left(u(r)\right)\mathrm{d}r\right\|_{L^2(D,U)}\\
\lesssim&(t-s)\left\|A^{\frac{\gamma}{2}}u_0\right\|_{L^2(D,U)}
\end{split}.
\end{equation*}
Based on Eqs. \eqref{eq:3.5-1} and \eqref{eq:3.5-0}, we have
\begin{equation*}
\begin{split}
I_2\lesssim&\left\|\int^t_s\cos\left(A^{\frac{\alpha}{2}}(t-r)\right)\mathrm{d}B_H(r)\right\|_{L^2(D,U)}\\
&+\left\|\int^s_0\left(\cos\left(A^{\frac{\alpha}{2}}(t-r)\right)-\ cos\left(A^{\frac{\alpha}{2}}(s-r)\right)\right)\mathrm{d}B_H(r)\right\|_{L^2(D,U)}\\
\lesssim&(t-s)^{H}+\left\|\int^s_0\left(\cos\left(A^{\frac{\alpha}{2}}(t-r)\right)-\ cos\left(A^{\frac{\alpha}{2}}(s-r)\right)\right)\mathrm{d}B_H(r)\right\|_{L^2(D,U)}\\
\lesssim&(t-s)^{\frac{\gamma-\alpha}{\alpha}}\frac{t^H}{\varepsilon}
\end{split}.
\end{equation*}
Then
\begin{equation*}
\left\|v(t)-v(s)\right\|_{L^2(D,U)}\lesssim (t-s)^{\frac{\gamma-\alpha}{\alpha}}\left(\frac{t^H}{\varepsilon}+\left\|u_0\right\|_{L^2(D,\dot{U}^\gamma)}+\left\|v_0\right\|_{L^2(D,\dot{U}^{\gamma-\alpha})}\right).
\end{equation*}

Similarly, we can obtain the remaining estimates by using the above method.
\end{proof}

For $\gamma\le\alpha$, Lemma \ref{le:1} implies that
\begin{equation*}
\lim_{s\to t}\left\|v(t)-v(s)\right\|_{L^2(D,U)}\neq0
\end{equation*}
Thus, the error can not converge to $0$ under mean-squared $L^2$-norm if using the semi-implicit Euler discretize the following equation in time
\begin{equation*}
u(t_{n+1})-u(t_{n})=\int^{t_{n+1}}_{t_{n}}v(s)\mathrm{d}s.
\end{equation*}
Then, an interesting question arises as to whether it is possible to find an effective difference method to obtain the time approximation of Eq. (3.2) in this case. In this paper, we provide a positive answer. Our method is to give an equivalent form of Eq. (3.2), the H\"older regularity of whose solution is better than the one of the solution of Eq. \eqref{eq:3.2}. In fact, combining Ornstein-Uhlenbeck process and Eq. \eqref{eq:3.2}, one can get the anticipated quivalent form \eqref{eq:3.7}. Let
\begin{equation}\label{eq:3.6-1}
Z(t)=X(t)-\int^t_0\mathrm{e}^{\Lambda(t-s)}\left[\begin{array}{c}
0\\
I
\end{array}\right]\mathrm{d}B_H(s),
\end{equation}
where
\begin{equation*}
Z(t)=
\left[\begin{array}{c}
z(t)\\[1.5mm]
\dot{z}(t)
\end{array} \right].
\end{equation*}

If $X(t)$ is the unique mild solution of \eqref{eq:3.2}, then $Z(t)$ is the unique mild solution of the  partial differential equation
\begin{equation}\label{eq:3.7}
\frac{\mathrm{d}}{\mathrm{d}t}Z(t)=\Lambda Z(t)+\left[\begin{array}{c}
0\\
f(u(t))
\end{array} \right] ~ {\rm for } ~ t\in(0,T] ~ {\rm with } ~ Z(0)=X(0),
\end{equation}
where
\begin{equation*}
f\left(u(t)\right)=f\left(z(t)+\int^{t}_0A^{-\frac{\alpha}{2}}\sin\left(A^{\frac{\alpha}{2}}(t-s)\right)\mathrm{d}B_H(s)\right).
\end{equation*}
The unique mild solution of \eqref{eq:3.7} is given by
\begin{equation}\label{eq:3.7-1}
Z(t)=\mathrm{e}^{\Lambda t}Z(0)+\int^t_0\mathrm{e}^{\Lambda (t-s)}\left[\begin{array}{c}
0\\
f(u(t))
\end{array} \right]\mathrm{d}s.
\end{equation}
Then we can obtain $z(t)$ and $\dot{z}(t)$ as
\begin{equation}\label{eq:3.7-2}
\begin{split}
z(t)&=\cos\left(A^{\frac{\alpha}{2}}t\right)u_0+A^{-\frac{\alpha}{2}}\sin\left(A^{\frac{\alpha}{2}}t\right)v_0+\int^t_0A^{-\frac{\alpha}{2}}\sin\left(A^{\frac{\alpha}{2}}(t-s)\right)f\left(u(s)\right)\mathrm{d}s,\\
\dot{z}(t)&=-A^{\frac{\alpha}{2}}\sin\left(A^{\frac{\alpha}{2}}t\right)u_0+\cos\left(A^{\frac{\alpha}{2}}t\right)v_0+\int^t_0\cos\left(A^{\frac{\alpha}{2}}(t-s)\right)f\left(u(s)\right)\mathrm{d}s.
\end{split}
\end{equation}
Then, combining Eqs. \eqref{eq:3.4} and \eqref{eq:3.7-2} leads to
\begin{equation}\label{eq:3.7-3}
\left\{
\begin{array}{ll}
u(t)=z(t)+\int^{t}_0A^{-\frac{\alpha}{2}}\sin\left(A^{\frac{\alpha}{2}}(t-s)\right)\mathrm{d}B_H(s)\\
\dot{u}(t)=\dot{z}(t)+\int^{t}_0\cos\left(A^{\frac{\alpha}{2}}(t-s)\right)\mathrm{d}B_H(s)
\end{array}
\right.,
\end{equation}
which agree with Eq. \eqref{eq:3.6-1}, as was expected.

Equations \eqref{eq:3.7} and \eqref{eq:3.7-3} will be used to obtain the temporal semi-discretization of \eqref{eq:3.2}. Therefore we give the following regularity estimates, which will be used to design numerical schemes and discuss the convergence behavior of errors.

\begin{proposition}\label{pro:3}
Suppose that Assumptions \ref{as:2.1} are satisfied, $\varepsilon>0$, $\gamma=\alpha+2\rho-\frac{d+\varepsilon}{2}$, $0<\gamma\le\alpha$, $\left\|u_0\right\|_{L^p(D,\dot{U}^{\gamma+\alpha})}<\infty$, and $\left\|v_0\right\|_{L^p(D,\dot{U}^{\gamma})}<\infty$. Then
\begin{equation*}
\begin{split}
&\left\|A^{\frac{\alpha}{2}}\left(z(t)-z(s)\right)\right\|_{L^2(D,U)}+\left\|\dot{z}(t)-\dot{z}(s)\right\|_{L^2(D,U)}\\
&\lesssim (t-s)^{\frac{\gamma}{\alpha}}\left(\sqrt{\frac{1}{\varepsilon}}+\left\|u_0\right\|_{L^2(D,\dot{U}^{\gamma+\alpha})}+\left\|v_0\right\|_{L^2(D,\dot{U}^{\gamma})}\right).
\end{split}
\end{equation*}
\end{proposition}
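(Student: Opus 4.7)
The plan is to work directly from the explicit formulas \eqref{eq:3.7-2} for $z(t)$ and $\dot z(t)$, exploiting the key structural difference from Lemma \ref{le:1}: the Ornstein--Uhlenbeck correction has removed the stochastic integral from $Z$, so $z$ and $\dot z$ inherit roughness only through the deterministic half-wave propagators acting on the initial data and through the nonlinear term $f(u(r))$. This is precisely why one can hope to prove H\"older continuity of $\dot z$ in a regime ($\gamma\le\alpha$) where Lemma \ref{le:1} gave no H\"older control on $\dot u$.

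First I would decompose $A^{\frac{\alpha}{2}}(z(t)-z(s))$ and $\dot z(t)-\dot z(s)$ into four pieces each via \eqref{eq:3.7-2}: the $u_0$-increment, the $v_0$-increment, the near-boundary source integral over $[s,t]$, and the integrated-difference source over $[0,s]$. The workhorse is the pointwise inequality $|\cos(a)-\cos(b)|+|\sin(a)-\sin(b)|\lesssim|a-b|^{\gamma/\alpha}$, valid for $\gamma/\alpha\in(0,1]$, which by Parseval in the Dirichlet basis yields $\|(\psi(A^{\frac{\alpha}{2}}t)-\psi(A^{\frac{\alpha}{2}}s))w\|\lesssim (t-s)^{\gamma/\alpha}\|A^{\frac{\gamma}{2}}w\|$ for $\psi\in\{\sin,\cos\}$. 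Applied with $w=A^{\frac{\alpha}{2}}u_0$ in the $u_0$-increments (the extra $A^{\frac{\alpha}{2}}$ explains why the hypothesis demands $u_0\in\dot U^{\gamma+\alpha}$) and with $w=v_0$ in the $v_0$-increments, this delivers the first two contributions to the claimed bound.

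The source integrals are handled in the same spirit. The near-boundary piece is controlled crudely by $(t-s)\sup_{r\in[s,t]}\|f(u(r))\|_{L^2(D,U)}$, then rewritten as $T^{1-\gamma/\alpha}(t-s)^{\gamma/\alpha}$ using $\gamma/\alpha\le 1$. The integrated-difference piece, by the same trig-difference inequality, reduces to $(t-s)^{\gamma/\alpha}\int_0^s\|A^{\frac{\gamma}{2}}f(u(r))\|_{L^2(D,U)}\,dr$; Assumption \ref{as:2.1} converts this into an integral of $1+\|A^{\frac{\gamma}{2}}u(r)\|_{L^2(D,U)}$. This is the only place the $\sqrt{1/\varepsilon}$ enters. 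To control $\|A^{\frac{\gamma}{2}}u(r)\|_{L^2(D,U)}$ I would use \eqref{eq:3.7-3} to split $A^{\frac{\gamma}{2}}u$ into $A^{\frac{\gamma}{2}}z$ (bounded by initial data via \eqref{eq:3.7-2}) plus the stochastic integral $\int_0^r A^{\frac{\gamma-\alpha}{2}}\sin(A^{\frac{\alpha}{2}}(r-\tau))\,dB_H(\tau)$; its variance is computed exactly as in \eqref{eq:3.5-1} but at Sobolev level $\gamma$, so with $\gamma-\alpha-2\rho=-(d+\varepsilon)/2$ and Lemma \ref{le:01} one gets $\sum_i i^{-1-\varepsilon/d}\lesssim 1/\varepsilon$, hence the $r^{H}/\sqrt{\varepsilon}$ bound. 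Final assembly is triangle inequality over the four increments, absorbing $T$-dependent constants and using the embeddings $\dot U^{\gamma+\alpha}\hookrightarrow\dot U^\gamma$ and $\dot U^\gamma\hookrightarrow\dot U^{\gamma-\alpha}$ (valid since $\gamma>0$ and $\gamma-\alpha\le 0$) to reduce every initial-data norm to those stated in the hypotheses.

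The main obstacle is bookkeeping rather than any new estimate: producing the $\sqrt{1/\varepsilon}$ factor with the \emph{correct} power ($1/2$, not $1$) requires re-running the covariance calculation of \eqref{eq:3.5-1} at spatial regularity $\gamma$ instead of $0$ rather than quoting Lemma \ref{le:1} verbatim (whose statement is loose in $\varepsilon$). Once that single $L^2$-in-space computation is made cleanly, the rest is a direct replay of the triangle-inequality scheme behind Lemma \ref{le:1}(i), simply without the stochastic-integral summand and with all function norms shifted up by $\alpha/2$ in spatial regularity.
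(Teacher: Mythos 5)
Your proposal is correct and follows essentially the same route as the paper's own proof: decompose via \eqref{eq:3.7-2} into the two initial-data increments and the two source-integral pieces, apply the spectral trig-difference inequality $|\psi(a)-\psi(b)|\lesssim|a-b|^{\gamma/\alpha}$ term by term, and close with Assumption \ref{as:2.1} and the a priori bound on $u$. Your only added value is the more careful tracking of the $\varepsilon$-power (the paper simply quotes Lemma \ref{le:1} and writes $\sqrt{1/\varepsilon}$ without re-running the covariance sum at level $\gamma$), which tightens a point the paper leaves loose but does not change the argument.
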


\begin{proof}
Firstly, we give two inequalities
\begin{equation*}
|\sin(a)-\sin(b)|\lesssim|\sin(a)-\sin(b)|^\theta=\left|\int^a_b\cos(t)\mathrm{d}t\right|^\theta\lesssim|a-b|^\theta
\end{equation*}
and
\begin{equation*}
|\cos(a)-\cos(b)|\lesssim|\cos(a)-\cos(b)|^\theta=\left|-\int^a_b\sin(t)\mathrm{d}t\right|^\theta\lesssim|a-b|^\theta,
\end{equation*}
where $0\le \theta\le1$. Then, combining Eq. \eqref{eq:3.7-2}, triangle inequality and Lemma \ref{le:1}, the H\"older regularity of $A^{\frac{\alpha}{2}}z(t)$ and $\dot{z}(t)$ can be established as
\begin{equation*}
\begin{split}
&\left\|A^{\frac{\alpha}{2}}\left(z(t)-z(s)\right)\right\|_{L^2(D,U)}\\
&\lesssim\left\|A^{\frac{\alpha}{2}}\left[\cos\left(A^{\frac{\alpha}{2}}t\right)-\cos\left(A^{\frac{\alpha}{2}}s\right)\right]u_0\right\|_{L^2(D,U)}\\
&~~~+\left\|\left[\sin\left(A^{\frac{\alpha}{2}}t\right)-\sin\left(A^{\frac{\alpha}{2}}s\right)\right]v_0\right\|_{L^2(D,U)}\\
&~~~+\int^t_s\left\|\sin\left(A^{\frac{\alpha}{2}}(t-r)\right)f\left(u(r)\right)\right\|_{L^2(D,U)}\mathrm{d}r\\
&~~~+\int^s_0\left\|\left[\sin\left(A^{\frac{\alpha}{2}}(t-r)\right)-\sin\left(A^{\frac{\alpha}{2}}(s-r)\right)\right]f\left(u(r)\right)\right\|_{L^2(D,U)}\mathrm{d}r\\
&\lesssim(t-s)^{\frac{\gamma}{\alpha}}\left(1+\left\|u_0\right\|_{L^2(D,\dot{U}^{\gamma+\alpha})}+\left\|v_0\right\|_{L^2(D,\dot{U}^{\gamma})}\right)
\end{split}
\end{equation*}
and
\begin{equation*}
\begin{split}
&\left\|\dot{z}(t)-\dot{z}(s)\right\|_{L^2(D,U)}\\
&\lesssim\left\|A^{\frac{\alpha}{2}}\left(\sin\left(A^{\frac{\alpha}{2}}t\right)-\sin\left(A^{\frac{\alpha}{2}}s\right)\right)u_0\right\|_{L^2(D,U)}\\
&~~~~+\left\|\left(\cos\left(A^{\frac{\alpha}{2}}t\right)-\cos\left(A^{\frac{\alpha}{2}}s\right)\right)v_0\right\|_{L^2(D,U)}\\
&~~~~+\left\|\int^t_s\cos\left(A^{\frac{\alpha}{2}}(t-r)\right)f\left(u(r)\right)\mathrm{d}r\right\|_{L^2(D,U)}\\
&~~~~+\left\|\int^s_0\cos\left(A^{\frac{\alpha}{2}}(t-r)\right)-\cos\left(A^{\frac{\alpha}{2}}(s-r)\right)f\left(u(r)\right)\mathrm{d}r\right\|_{L^2(D,U)}\\
&\lesssim(t-s)^{\frac{\gamma}{\alpha}}\left\|A^{\frac{\alpha+\gamma}{2}}u_0\right\|_{L^2(D,U)}+(t-s)^{\frac{\gamma}{\alpha}}\left\|A^{\frac{\gamma}{2}}v_0\right\|_{L^2(D,U)}\\
&~~~~+\int^t_s\left\|f\left(u(r)\right)\right\|_{L^2(D,U)}\mathrm{d}r+(t-s)^{\frac{\gamma}{\alpha}}\int^s_0\left\|A^{\frac{\gamma}{2}}f\left(u(r)\right)\right\|_{L^2(D,U)}\mathrm{d}r\\
&\lesssim (t-s)^{\frac{\gamma}{\alpha}}\left(\sqrt{\frac{1}{\varepsilon}}+\left\|u_0\right\|_{L^2(D,\dot{U}^{\gamma+\alpha})}+\left\|v_0\right\|_{L^2(D,\dot{U}^{\gamma})}\right)
\end{split}.
\end{equation*}
\end{proof}

To discuss the numerical analysis of the higher order time discretization, we establish the H\"older regularity estimates of $A^{\frac{\alpha}{2}}\dot{z}(t)$ and $\ddot{z}(t)$. Using Eqs. \eqref{eq:3.7} and \eqref{eq:3.7-2}, we can get Proposition \ref{pro:4}.
\begin{proposition}\label{pro:4}
Suppose that Assumption \ref{as:2.1} is satisfied, $\left\|u_0\right\|_{L^p(D,\dot{U}^{\gamma+\alpha})}<\infty$, $\left\|v_0\right\|_{L^p(D,\dot{U}^{\gamma})}<\infty$, $\varepsilon>0$, $\gamma=\alpha+2\rho-\frac{d+\varepsilon}{2}$, and $\gamma>\alpha$.
Then
\begin{equation*}
\begin{split}
&\left\|A^{\frac{\alpha}{2}}\left(\dot{z}(t)-\dot{z}(s)\right)\right\|_{L^2(D,U)}+\left\|\ddot{z}(t)-\ddot{z}(s)\right\|_{L^2(D,U)}\\
&\lesssim (t-s)^{\min\{\frac{\gamma-\alpha}{\alpha},1\}}\left(1+\left\|u_0\right\|_{L^2(D,\dot{U}^{\gamma+\alpha})}+\left\|v_0\right\|_{L^2(D,\dot{U}^{\gamma})}\right)
\end{split}.
\end{equation*}
\end{proposition}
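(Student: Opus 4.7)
The plan is to repeat the analysis of Proposition \ref{pro:3} one derivative higher. For $A^{\alpha/2}\bigl(\dot z(t)-\dot z(s)\bigr)$ I would work directly from \eqref{eq:3.7-2} and split into four blocks: the $u_0$-block $-A^\alpha[\sin(A^{\alpha/2}t)-\sin(A^{\alpha/2}s)]u_0$, the $v_0$-block $A^{\alpha/2}[\cos(A^{\alpha/2}t)-\cos(A^{\alpha/2}s)]v_0$, the leading inhomogeneous piece $\int_s^t A^{\alpha/2}\cos(A^{\alpha/2}(t-r))f(u(r))\,\mathrm{d}r$, and the lagging piece $\int_0^s A^{\alpha/2}[\cos(A^{\alpha/2}(t-r))-\cos(A^{\alpha/2}(s-r))]f(u(r))\,\mathrm{d}r$. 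For $\ddot z$, instead of differentiating under the integral sign I would exploit the governing equation \eqref{eq:3.7} to write
\begin{equation*}
\ddot z(t) - \ddot z(s) = -A^{\alpha}\bigl(z(t) - z(s)\bigr) + \bigl(f(u(t)) - f(u(s))\bigr),
\end{equation*}
so that the same four-block machinery (now applied to the representation of $A^{\alpha}z$, which is structurally identical to that of $A^{\alpha/2}\dot z$) handles the first piece, while the second piece reduces to a simple Lipschitz bound.

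The key tool is the elementary H\"older trigonometric inequality recalled at the start of the proof of Proposition \ref{pro:3}: for $0\le\theta\le 1$, $|\sin a - \sin b| \lesssim |a-b|^\theta$ and $|\cos a - \cos b| \lesssim |a-b|^\theta$. Applied eigenfunction-wise with $a=\lambda_i^{\alpha/2}t$, $b=\lambda_i^{\alpha/2}s$, it exchanges a trigonometric difference for a factor $\lambda_i^{\alpha\theta/2}(t-s)^\theta$. Choosing $\theta=\min\{(\gamma-\alpha)/\alpha,1\}$, every block reduces to the single requirement $\alpha\theta\le\gamma-\alpha$: the $u_0$-block is bounded by $(t-s)^\theta\|A^{\alpha+\alpha\theta/2}u_0\|_{L^2(D,U)}\le(t-s)^\theta\|u_0\|_{\dot U^{\gamma+\alpha}}$ and the $v_0$-block by $(t-s)^\theta\|v_0\|_{\dot U^\gamma}$, which matches the hypotheses precisely. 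The leading $\int_s^t$ is dominated pointwise by $\|A^{\alpha/2}f(u(r))\|_{L^2(D,U)}\lesssim 1+\|A^{\alpha/2}u(r)\|_{L^2(D,U)}$ via Assumption \ref{as:2.1} (admissible because $\alpha<\gamma$) and Lemma \ref{le:1}, yielding an upper bound of order $(t-s)\le T^{1-\theta}(t-s)^\theta$; the lagging $\int_0^s$ inherits the trigonometric factor and is controlled by $(t-s)^\theta\int_0^s\|A^{\gamma/2}f(u(r))\|_{L^2(D,U)}\,\mathrm{d}r\lesssim(t-s)^\theta$. The Lipschitz term in $\ddot z$ satisfies $\|f(u(t))-f(u(s))\|_{L^2(D,U)}\lesssim\|u(t)-u(s)\|_{L^2(D,U)}\le(t-s)\sup_r\|v(r)\|_{L^2(D,U)}$, once more absorbed into $T^{1-\theta}(t-s)^\theta$ using the $v$-bound from Lemma \ref{le:1}.

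The main obstacle I anticipate is purely bookkeeping: checking that every Sobolev index produced by the trigonometric H\"older trick is controlled by the available norm. Happily, all the bounds collapse to the single inequality $\alpha\theta\le\gamma-\alpha$, which is why $\theta=\min\{(\gamma-\alpha)/\alpha,1\}$ is simultaneously forced and optimal under the stated hypotheses. The ceiling at $\theta=1$ is intrinsic, since $|\sin a-\sin b|\le|a-b|$ is the best pointwise Lipschitz bound for the sine, so the proposition cannot yield a H\"older exponent exceeding $1$ no matter how smooth the data are; that ceiling is precisely what the $\min$ in the statement encodes.
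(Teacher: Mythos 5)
Your proposal is correct and follows essentially the same route as the paper: the paper's own (very brief) proof also passes through the governing equation $\ddot z=-A^{\alpha}z+f(u)$ for the second-derivative term, and the block-by-block treatment of the representation \eqref{eq:3.7-2} via the H\"older trigonometric inequality is exactly the argument the paper carries out in Proposition \ref{pro:3} and implicitly reuses here. Your write-up simply supplies the bookkeeping (the single constraint $\alpha\theta\le\gamma-\alpha$) that the paper leaves to the reader.
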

\begin{proof}
 A brief derivation is given as
\begin{equation*}
\begin{split}
\left\|\ddot{z}(t)-\ddot{z}(s)\right\|_{L^2(D,U)}&\lesssim \left\|A^\alpha(z(t)-z(s))\right\|_{L^2(D,U)}+\left\|f\left(u(t)\right)-f\left(u(s)\right)\right\|_{L^2(D,U)}\\
&\lesssim(t-s)^{\min\{\frac{\gamma-\alpha}{\alpha},1\}}\left(1+\left\|u_0\right\|_{L^2(D,\dot{U}^{\gamma+\alpha})}+\left\|v_0\right\|_{L^2(D,\dot{U}^{\gamma})}\right)
\end{split}.
\end{equation*}
\end{proof}

\section{Temporal Discretization} \label{sec:4}
In this section, we concern the time discretization of \eqref{eq:3.2}. Based on Propositions \ref{pro:3} and \ref{pro:4}, we design two discrete schemes. Meanwhile stability and error estimates of the discrete schemes are derived.

\subsection{Low order time discretization}
Using Eq. \eqref{eq:3.7}, we have
\begin{equation}\label{eq:4.0-0}
\left\{
\begin{array}{ll}
\dot{z}(t_{n+1})-\dot{z}(t_{n})=-\int^{t_{n+1}}_ {t_{n}}A^\alpha z(s)\mathrm{d}s+\int^{t_{n+1}}_ {t_{n}}f(u(s))\mathrm{d}s\\
z(t_{n+1})-z(t_{n})=\int^{t_{n+1}}_ {t_{n}}\dot{z}(s)\mathrm{d}s
\end{array}
\right..
\end{equation}

The convergence rate of difference methods is generally limited by the smoothness of the solution. As $0<\gamma\le\alpha$, Proposition \ref{pro:3} shows $A^{\frac{\alpha}{2}}z(t)$ and $\dot{z}(t)$ are H\"older continuous in the sense of mean-squared $L^p$-norm. Thus, we apply rectangle formula to discretize the integrals in Eq. \eqref{eq:4.0-0}. Let $z_{n}$ and $\dot{z}_{n}$ denote respectively the approximation of $z(t_{n})$ and $\dot{z}(t_{n})$ with
fixed time step size $\tau=\frac{T}{N}$ and $t_n=n\tau$ $(n=0,1,2,\dots,N)$. Then, we can get temporal discretization of Eq. \eqref{eq:4.0-0}
\begin{equation}\label{eq:4.0-1}
\left\{
\begin{array}{ll}
\dot{z}_{n+1}-\dot{z}_{n}=-\tau A^\alpha z_{n+1}+\tau f(u_{n})\\
z_{n+1}-z_{n}=\tau \dot{z}_{n+1}
\end{array}
\right.,
\end{equation}
where $u_{n}$ is the numerical solution of Eq. \eqref{eq:3.1} in time. We give the approximation of $\int^{t_{n+1}}_0A^{-\frac{\alpha}{2}}\sin\left(A^{\frac{\alpha}{2}}(t_{n+1}-s)\right)\mathrm{d}B_H(s)$ to obtain $u_{n+1}$, that is
\begin{equation}\label{eq:4.0-0-0}
\sum^{n}_{i=0}\int^{t_{i+1}}_{t_{i}}A^{-\frac{\alpha}{2}}\sin\left(A^{\frac{\alpha}{2}}(t_{n+1}-t_{i})\right)\mathrm{d}B_H(s).
\end{equation}
Then, using Eq. \eqref{eq:3.7-3}, we have
\begin{equation*}
u_{0}=z_{0}
\end{equation*}
and
\begin{equation}\label{eq:4.0-0-1}
u_{n+1}=z_{n+1}+\sum^{n}_{i=0}\int^{t_{i+1}}_{t_{i}}A^{-\frac{\alpha}{2}}\sin\left(A^{\frac{\alpha}{2}}(t_{n+1}-t_{i})\right)\mathrm{d}B_H(s).
\end{equation}

\begin{theorem}\label{th:1}
Let $\dot{z}_{n+1}$ and $z_{n+1}$ be expressed by Eq. \eqref{eq:4.0-1}. Suppose that Assumptions \ref{as:2.1} are satisfied, $\left\|u_0\right\|_{L^p(D,\dot{U}^{\gamma+\alpha})}<\infty$, $\left\|v_0\right\|_{L^p(D,\dot{U}^{\gamma})}<\infty$, $\varepsilon>0$, $\gamma=\alpha+2\rho-\frac{d+\varepsilon}{2}$, and $0<\gamma\le\alpha$. Then
\begin{equation*}
\left\|u_{n+1}\right\|_{L^2(D,U)}\lesssim\left\|A^{-\frac{\alpha}{2}}v_{0}\right\|_{L^2(D,U)}+\left\|u_{0}\right\|_{L^2(D,U)}+1.
\end{equation*}
\end{theorem}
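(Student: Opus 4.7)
The plan is to run a discrete energy argument with a \emph{weighted} energy chosen so that the initial data norms on the right-hand side of the claimed bound appear naturally. I set
\begin{equation*}
E_n := \|A^{-\alpha/2}\dot{z}_n\|^2 + \|z_n\|^2,
\end{equation*}
so that $E_0=\|A^{-\alpha/2}v_0\|^2+\|u_0\|^2$ is already in the form required. Writing $I_{n+1}$ for the discrete stochastic convolution in \eqref{eq:4.0-0-0}, \eqref{eq:4.0-0-1} gives $u_{n+1}=z_{n+1}+I_{n+1}$, and since $\|z_{n+1}\|^2\le E_{n+1}$, the theorem reduces to controlling $\mathrm{E}[E_{n+1}]$ and $\mathrm{E}[\|I_{n+1}\|^2]$ uniformly in $n$.

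To derive the energy identity I pair the first equation of \eqref{eq:4.0-1} with $A^{-\alpha}\dot{z}_{n+1}$ in $U$. Self-adjointness turns the stiff term into $-\tau\langle z_{n+1},\dot{z}_{n+1}\rangle$, and inserting the second equation $\tau\dot{z}_{n+1}=z_{n+1}-z_n$ on both sides produces the telescoping identity
\begin{equation*}
\tfrac{1}{2}(E_{n+1}-E_n) + \tfrac{1}{2}\|A^{-\alpha/2}(\dot{z}_{n+1}-\dot{z}_n)\|^2 + \tfrac{1}{2}\|z_{n+1}-z_n\|^2 = \tau\langle A^{-\alpha/2}f(u_n), A^{-\alpha/2}\dot{z}_{n+1}\rangle.
\end{equation*}
Dropping the non-negative fluctuation squares, applying Young's inequality to the right-hand side, and using $\|A^{-\alpha/2}f(u_n)\|\lesssim \|f(u_n)\|\lesssim 1+\|u_n\|$ (from Assumption~\ref{as:2.1} and $\lambda_1>0$), together with the splitting $\|u_n\|^2\le 2\|z_n\|^2+2\|I_n\|^2\le 2E_n+2\|I_n\|^2$, yield, for $\tau$ below some fixed threshold,
\begin{equation*}
E_{n+1}\le (1+C\tau)E_n + C\tau(1+\|I_n\|^2).
\end{equation*}
Discrete Gronwall then gives $E_n\lesssim E_0 + T + \tau\sum_{k=0}^{n-1}\|I_k\|^2$.

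The final ingredient is a uniform-in-$n$ bound on $\mathrm{E}[\|I_n\|^2]$. Expanding on the orthonormal basis $\{\phi_i\}$, exploiting that the integrand is piecewise constant in $s$, and applying Lemma~\ref{le:02} together with $|\sin|\le 1$, one obtains
\begin{equation*}
\mathrm{E}[\|I_n\|^2]\lesssim H(2H-1)\sum_{i=1}^{\infty}\sigma_i^2\lambda_i^{-\alpha}\int_0^{t_n}\int_0^{t_n}|r-r_1|^{2H-2}\,\mathrm{d}r\,\mathrm{d}r_1 \lesssim T^{2H}\sum_{i=1}^{\infty}\lambda_i^{-2\rho-\alpha},
\end{equation*}
and Lemma~\ref{le:01} together with the hypothesis $\gamma=\alpha+2\rho-(d+\varepsilon)/2>0$ makes this a convergent series of order $1/\varepsilon$, by exactly the same chain of bounds that produced \eqref{eq:3.5-1}. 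Taking expectations in the Gronwall bound and combining with $\|u_{n+1}\|^2\le 2\|z_{n+1}\|^2+2\|I_{n+1}\|^2$ then delivers the theorem.

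The main obstacle I anticipate is the \emph{choice of energy}: the standard wave-equation energy $\|\dot{z}_n\|^2+\|A^{\alpha/2}z_n\|^2$ would introduce $\|v_0\|$ and $\|A^{\alpha/2}u_0\|$ on the right, which does not match the theorem. The key move is pairing the discrete momentum equation with $A^{-\alpha}\dot{z}_{n+1}$ rather than $\dot{z}_{n+1}$, which shifts two powers of $A^{1/2}$ from $u_0$ onto $v_0$; the remaining steps are routine Gronwall bookkeeping and a repetition of the fBm isometry estimates already performed in Section~\ref{sec:3}.
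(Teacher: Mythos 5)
Your proposal is correct and follows essentially the same route as the paper: the same energy $\|A^{-\alpha/2}\dot{z}_{n+1}\|^2+\|z_{n+1}\|^2$, the same pairing of the momentum equation with $A^{-\alpha}\dot{z}_{n+1}$ and the position equation with $z_{n+1}$ so that the stiff cross terms cancel, followed by the Lipschitz bound on $f$, the splitting $u_n=z_n+I_n$, discrete Gr\"onwall, and the fBm isometry bound on the discrete stochastic convolution. The only cosmetic difference is that you make the telescoping identity and the smallness-of-$\tau$ absorption explicit, which the paper leaves implicit.
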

\begin{proof}
By using the weak formulations of Eq. \eqref{eq:4.0-1}, we have
\begin{equation*}
\left\langle\dot{z}_{n+1}-\dot{z}_{n},A^{-\alpha}\dot{z}_{n+1}\right\rangle=\left\langle-\tau A^{\alpha} z_{n+1}+\tau f\left(u_{n}\right),A^{-\alpha}\dot{z}_{n+1}\right\rangle
\end{equation*}
and
\begin{equation*}
\left\langle z_{n+1}-z_{n},z_{n+1}\right\rangle=\left\langle\tau \dot{z}_{n+1},z_{n+1}\right\rangle.
\end{equation*}
Using the fact $(a-b)a=\frac{1}{2}(a^2-b^2)+\frac{1}{2}(a-b)^2$, we get
\begin{equation*}
\begin{split}
&\mathrm{E}\left[\left\|A^{-\frac{\alpha}{2}}\dot{z}_{n+1}\right\|^2\right]-\mathrm{E}\left[\left\|A^{-\frac{\alpha}{2}}\dot{z}_{n}\right\|^2\right]+\mathrm{E}\left[\left\|z_{n+1}\right\|^2\right]-\mathrm{E}\left[\left\|z_{n}\right\|^2\right]\\
&\lesssim\tau\left(\mathrm{E}\left[\left\|f\left(u_{n}\right)\right\|^2\right]+\mathrm{E}\left[\left\|A^{-\frac{\alpha}{2}}\dot{z}_{n+1}\right\|^2\right]\right)
\end{split}.
\end{equation*}
Then
\begin{equation*}
\begin{split}
&\mathrm{E}\left[\left\|A^{-\frac{\alpha}{2}}\dot{z}_{n+1}\right\|^2\right]+\mathrm{E}\left[\left\|z_{n+1}\right\|^2\right]\\
&\lesssim\tau\sum^{n+1}_{i=1}\left(\mathrm{E}\left[\left\|f\left(u_{i}\right)\right\|^2\right]+\mathrm{E}\left[\left\|A^{-\frac{\alpha}{2}}\dot{z}_{i}\right\|^2\right]\right)+\mathrm{E}\left[\left\|A^{-\frac{\alpha}{2}}\dot{z}_{0}\right\|^2\right]+\mathrm{E}\left[\left\|z_{0}\right\|^2\right]\\
&\lesssim\tau\sum^{n+1}_{i=1}\left(\mathrm{E}\left[\left\|u_{i}\right\|^2\right]+\mathrm{E}\left[\left\|A^{-\frac{\alpha}{2}}\dot{z}_{i}\right\|^2\right]\right)+\mathrm{E}\left[\left\|A^{-\frac{\alpha}{2}}\dot{z}_{0}\right\|^2\right]+\mathrm{E}\left[\left\|z_{0}\right\|^2\right]+1
\end{split}.
\end{equation*}
Then, using Eq. \eqref{eq:4.0-0-1} and the discrete Gr\"onwall inequality leads to
\begin{equation*}
\mathrm{E}\left[\left\|A^{-\frac{\alpha}{2}}\dot{z}_{n+1}\right\|^2\right]+\mathrm{E}\left[\left\|z_{n+1}\right\|^2\right]\lesssim\mathrm{E}\left[\left\|A^{-\frac{\alpha}{2}}\dot{z}_{0}\right\|^2\right]+\mathrm{E}\left[\left\|z_{0}\right\|^2\right]+1,
\end{equation*}
which implies
\begin{equation*}
\begin{split}
\mathrm{E}\left[\left\|u_{n+1}\right\|^2\right]&\lesssim\mathrm{E}\left[\left\|z_{n+1}\right\|^2\right]+\mathrm{E}\left[\left\|\sum^{n}_{i=0}\int^{t_{i+1}}_{t_{i}}A^{-\frac{\alpha}{2}}\sin\left(A^{\frac{\alpha}{2}}(t_{n+1}-t_{i})\right)\mathrm{d}B_H(s)\right\|^2\right]\\
&\lesssim\mathrm{E}\left[\left\|A^{-\frac{\alpha}{2}}\dot{z}_{0}\right\|^2\right]+\mathrm{E}\left[\left\|z_{0}\right\|^2\right]+1
\end{split}.
\end{equation*}
\end{proof}

Next, we discuss the error behavior. Combining Lemma \ref{le:1}, Proposition \ref{pro:3}, Eqs. \eqref{eq:3.7-3} and \eqref{eq:4.0-0-1}, we get the following theorem.

\begin{theorem}\label{th:2}
Let $\dot{z}_{n+1}$ and $z_{n+1}$ be expressed by Eq. \eqref{eq:4.0-1}. Suppose that Assumptions \ref{as:2.1} are satisfied, $\left\|u_0\right\|_{L^p(D,\dot{U}^{\gamma+\alpha})}<\infty$, $\left\|v_0\right\|_{L^p(D,\dot{U}^{\gamma})}<\infty$, $\varepsilon>0$, $\gamma=\alpha+2\rho-\frac{d+\varepsilon}{2}$, and $0<\gamma\le\alpha$. Then
\begin{equation*}
\left\|u(t_{n+1})-u_{n+1}\right\|_{L^2(D,U)}\lesssim\tau^{\frac{\gamma}{\alpha}}\left(\sqrt{\frac{1}{\varepsilon}}+\left\|u_0\right\|_{L^2(D,\dot{U}^{\gamma+\alpha})}+\left\|v_0\right\|_{L^2(D,\dot{U}^{\gamma})}\right).
\end{equation*}
\end{theorem}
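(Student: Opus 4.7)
The plan is to split the total error into a deterministic part and a stochastic part, analyze each separately, then combine them. Using the mild-form representation \eqref{eq:3.7-3} together with the update rule \eqref{eq:4.0-0-1}, I can write
\begin{equation*}
u(t_{n+1}) - u_{n+1} = \bigl(z(t_{n+1}) - z_{n+1}\bigr) + \sum_{i=0}^{n}\int_{t_i}^{t_{i+1}} A^{-\frac{\alpha}{2}}\bigl[\sin(A^{\frac{\alpha}{2}}(t_{n+1}-s)) - \sin(A^{\frac{\alpha}{2}}(t_{n+1}-t_i))\bigr]\mathrm{d}B_H(s).
\end{equation*}
For the stochastic term I would mimic the computation of \eqref{eq:3.5-0}: apply Lemma \ref{le:02} with the kernel given by the difference of sines, bound $|\sin a - \sin b| \lesssim |a-b|^{\gamma/\alpha}$ (using the inequality from the proof of Proposition \ref{pro:3} with $\theta = \gamma/\alpha$), which produces a factor $\lambda_k^{(\gamma-\alpha)/2}\tau^{\gamma/\alpha}$ on each eigenmode. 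Lemma \ref{le:01} together with $\gamma = \alpha + 2\rho - (d+\varepsilon)/2$ then makes the eigenvalue sum convergent (with a $1/\varepsilon$ factor) and yields a bound of order $\tau^{\gamma/\alpha}$.

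For the deterministic part I would set $e_n = z(t_n)-z_n$, $\dot e_n = \dot z(t_n)-\dot z_n$. Subtracting the continuous system \eqref{eq:4.0-0} from the scheme \eqref{eq:4.0-1} produces
\begin{equation*}
\dot e_{n+1} - \dot e_n + \tau A^\alpha e_{n+1} = T_1^{n+1} + T_2^{n+1} + \tau\bigl(f(u(t_n)) - f(u_n)\bigr),\qquad e_{n+1} - e_n - \tau \dot e_{n+1} = T_3^{n+1},
\end{equation*}
with rectangle-rule remainders $T_1^{n+1} = \int_{t_n}^{t_{n+1}} A^\alpha(z(t_{n+1})-z(s))\,\mathrm{d}s$, $T_2^{n+1} = \int_{t_n}^{t_{n+1}}(f(u(s))-f(u(t_n)))\,\mathrm{d}s$, $T_3^{n+1} = \int_{t_n}^{t_{n+1}}(\dot z(s)-\dot z(t_{n+1}))\,\mathrm{d}s$. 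Proposition \ref{pro:3} controls $\|A^{-\alpha/2}T_1^{n+1}\|_{L^2(D,U)}$ and $\|T_3^{n+1}\|_{L^2(D,U)}$, while Lemma \ref{le:1}(i) and Assumption \ref{as:2.1} control $\|T_2^{n+1}\|_{L^2(D,U)}$; each is of order $\tau^{1+\gamma/\alpha}$.

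Next I would run the same energy argument used in Theorem \ref{th:1}: test the first error equation against $A^{-\alpha}\dot e_{n+1}$ and the second against $e_{n+1}$, use $(a-b)a = \tfrac12(a^2-b^2) + \tfrac12(a-b)^2$, and cancel the cross term $\tau\langle A^\alpha e_{n+1},A^{-\alpha}\dot e_{n+1}\rangle = \tau\langle \dot e_{n+1}, e_{n+1}\rangle$. Setting $E_n = \|A^{-\alpha/2}\dot e_n\|_{L^2(D,U)}^2 + \|e_n\|_{L^2(D,U)}^2$ and applying Young's inequality with weight $\tau$ to each inner product on the right produces
\begin{equation*}
E_{n+1} - E_n \lesssim \tau E_{n+1} + \tau \|u(t_n)-u_n\|_{L^2(D,U)}^2 + \tfrac{1}{\tau}\|A^{-\alpha/2}T_1^{n+1}\|_{L^2(D,U)}^2 + \tfrac{1}{\tau}\|T_2^{n+1}\|_{L^2(D,U)}^2 + \tfrac{1}{\tau}\|T_3^{n+1}\|_{L^2(D,U)}^2,
\end{equation*}
and the Lipschitz property of $f$ coupled with the stochastic error bound gives $\|u(t_n)-u_n\|^2 \lesssim E_n + \tau^{2\gamma/\alpha}$. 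Each truncation contribution is $\tau^{-1}\cdot\tau^{2+2\gamma/\alpha} = \tau^{1+2\gamma/\alpha}$, so summing over $n$ produces $\sum_n \tau \cdot \tau^{2\gamma/\alpha}$, which is of the target order. A discrete Grönwall inequality then delivers $E_N \lesssim \tau^{2\gamma/\alpha}(\cdots)$, and combining with the stochastic bound via the triangle inequality closes the proof.

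The main obstacle I expect is reconciling the fact that the scheme is fully implicit in $A^\alpha z_{n+1}$ but only explicit in $f(u_n)$: one must carry the stochastic perturbation $S_n = u_n - z_n$ through the Lipschitz bound on $f(u(t_n))-f(u_n)$ without losing a power of $\tau$, and one must choose the Young weights so that the truncation terms, which are only $O(\tau^{1+\gamma/\alpha})$ and not $O(\tau^{3/2+\gamma/\alpha})$, still sum to the desired $\tau^{2\gamma/\alpha}$ after division by $\tau$; the right choice is the $\tau$-scaled Young weight indicated above, which uses that $\gamma/\alpha$ may be strictly less than $1/2$.
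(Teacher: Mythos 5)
Your proposal is correct and follows essentially the same route as the paper's proof: the same splitting of $u(t_{n+1})-u_{n+1}$ into $e_{n+1}=z(t_{n+1})-z_{n+1}$ plus the stochastic-integral quadrature error (bounded via Lemma \ref{le:02} and the eigenvalue asymptotics, yielding the $\tau^{2\gamma/\alpha}/\varepsilon$ term), the same error equations tested against $A^{-\alpha}\dot e_{n+1}$ and $e_{n+1}$ with cancellation of the cross terms, the same use of Proposition \ref{pro:3} and Lemma \ref{le:1}(i) for the rectangle-rule remainders, and a discrete Gr\"onwall argument. The only cosmetic difference is that you apply Cauchy--Schwarz to the remainder integrals before Young's inequality (giving $\tau^{-1}\|T\|^2$) whereas the paper applies Young's inequality inside the integrals; both give the same order $\tau^{1+2\gamma/\alpha}$ per step.
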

\begin{proof}
Using Eqs. \eqref{eq:3.7-3} and \eqref{eq:4.0-0-1} leads to
\begin{equation*}
\begin{split}
u(t_{n+1})-u_{n+1}=&z(t_{n+1})-z_{n+1}+\int^{t_{n+1}}_0A^{-\frac{\alpha}{2}}\sin\left(A^{\frac{\alpha}{2}}(t_{n+1}-s)\right)\mathrm{d}B_H(s)\\
&-\sum^{n}_{i=0}\int^{t_{i+1}}_{t_{i}}A^{-\frac{\alpha}{2}}\sin\left(A^{\frac{\alpha}{2}}(t_{n+1}-t_{i})\right)\mathrm{d}B_H(s)
\end{split}.
\end{equation*}
Then
\begin{equation}\label{eq:4.0-1-2}
\begin{split}
\left\|u(t_{n+1})-u_{n+1}\right\|_{L^2(D,U)}&\le\left\|z(t_{n+1})-z_{n+1}\right\|_{L^2(D,U)}\\
&~~~~+\left\|\int^{t_{n+1}}_0A^{-\frac{\alpha}{2}}\sin\left(A^{\frac{\alpha}{2}}(t_{n+1}-s)\right)\mathrm{d}B_H(s)\right.\\
&~~~~\left.-\sum^{n}_{i=0}\int^{t_{i+1}}_{t_{i}}A^{-\frac{\alpha}{2}}\sin\left(A^{\frac{\alpha}{2}}(t_{n+1}-t_{i})\right)\mathrm{d}B_H(s)\right\|_{L^2(D,U)}
\end{split}.
\end{equation}
Lemma \ref{le:02} implies
\begin{equation}\label{eq:4.0-0-2}
\begin{split}
&\mathrm{E}\left[\left\|\int^{t_{n+1}}_0A^{-\frac{\alpha}{2}}\sin\left(A^{\frac{\alpha}{2}}(t_{n+1}-s)\right)\mathrm{d}B_H(s)-\sum^{n}_{i=0}\int^{t_{i+1}}_{t_{i}}A^{-\frac{\alpha}{2}}\sin\left(A^{\frac{\alpha}{2}}(t_{n+1}-t_{i})\right)\mathrm{d}B_H(s)\right\|^2\right]\\
&\lesssim\frac{1}{\varepsilon}\tau^{\frac{2\gamma}{\alpha}}
\end{split}.
\end{equation}
Let
\begin{equation*}
\left\{
\begin{array}{ll}
\dot{e}_{n+1}=\dot{z}(t_{n+1})-\dot{z}_{n+1}\\
e_{n+1}=z(t_{n+1})-z_{n+1}
\end{array}
\right..
\end{equation*}
Using Eqs. \eqref{eq:4.0-0} and \eqref{eq:4.0-1} leads to
\begin{equation*}
\left\{
\begin{array}{ll}
\dot{e}_{n+1}-\dot{e}_{n}=-\int^{t_{n+1}}_{t_{n}}A^\alpha\left(z(s)-z_{n+1}\right)\mathrm{d}s+\int^{t_{n+1}}_{t_{n}}\left(f(u(s))-f\left(u_{n}\right)\right)\mathrm{d}s\\
e_{n+1}-e_{n}=\int^{t_{n+1}}_{t_{n}}\left(\dot{z}(s)-\dot{z}_{n+1}\right)\mathrm{d}s
\end{array}
\right..
\end{equation*}
Then, H\"older inequality implies
\begin{equation}\label{eq:4.0-2}
\begin{split}
&\mathrm{E}\left[\left\langle\dot{e}_{n+1}-\dot{e}_{n},A^{-\alpha}\dot{e}_{n+1}\right\rangle\right]\\
&=-\mathrm{E}\left[\left\langle\int^{t_{n+1}}_{t_{n}}A^\alpha\left(z(s)-z_{n+1}\right)\mathrm{d}s,A^{-\alpha}\dot{e}_{n+1}\right\rangle\right]\\
&~~~~+\mathrm{E}\left[\left\langle\int^{t_{n+1}}_{t_{n}}\left(f(u(s))-f\left(u_{n}\right)\right)\mathrm{d}s,A^{-\alpha}\dot{e}_{n+1}\right\rangle\right]\\
&\lesssim\mathrm{E}\left[\int^{t_{n+1}}_{t_{n}}\left\|A^{\frac{\alpha}{2}}\left(z(s)-z(t_{n+1})\right)\right\|\left\|A^{-\frac{\alpha}{2}}\dot{e}_{n+1}\right\|\mathrm{d}s\right]\\
&~~~~-\mathrm{E}\left[\int^{t_{n+1}}_{t_{n}}\left\langle e_{n+1},\dot{e}_{n+1}\right\rangle\mathrm{d}s\right]+\mathrm{E}\left[\int^{t_{n+1}}_{t_{n}}\left\|\left(f(u(s))-f\left(u_{n}\right)\right)\right\|\left\|A^{-\frac{\alpha}{2}}\dot{e}_{n+1}\right\|\mathrm{d}s\right]
\end{split}
\end{equation}
and
\begin{equation}\label{eq:4.0-3}
\begin{split}
\mathrm{E}\left[\left\langle e_{n+1}-e_{n},e_{n+1}\right\rangle\right]=&\mathrm{E}\left[\int^{t_{n+1}}_{t_{n}}\left\langle \dot{z}(s)-\dot{z}_{n+1},e_{n+1}\right\rangle\mathrm{d}s\right]\\
\lesssim&\mathrm{E}\left[\int^{t_{n+1}}_{t_{n}}\left\| \dot{z}(s)-\dot{z}(t_{n+1})\right\|\left\|e_{n+1}\right\|\mathrm{d}s\right]\\
&+\mathrm{E}\left[\int^{t_{n+1}}_{t_{n}}\left\langle \dot{e}_{n+1},e_{n+1}\right\rangle\mathrm{d}s\right]\\
\end{split}
\end{equation}
Using Lemma\ref{le:1}, Proposition \ref{pro:3}, Eqs. \eqref{eq:4.0-0-2}, \eqref{eq:4.0-2} and \eqref{eq:4.0-3}, we get
\begin{equation*}
\begin{split}
&\mathrm{E}\left[\left\|A^{-\frac{\alpha}{2}}\dot{e}_{n+1}\right\|^2-\left\|A^{-\frac{\alpha}{2}}\dot{e}_{n}\right\|^2\right]+\mathrm{E}\left[\left\|e_{n+1}\right\|^2-\left\|e_{n}\right\|^2\right]\\
&\lesssim\mathrm{E}\left[\int^{t_{n+1}}_{t_{n}}\left\|A^{\frac{\alpha}{2}}\left(z(s)-z(t_{n+1})\right)\right\|\left\|A^{-\frac{\alpha}{2}}\dot{e}_{n+1}\right\|\mathrm{d}s\right]\\
&~~~~+\mathrm{E}\left[\int^{t_{n+1}}_{t_{n}}\left\|\left(f(u(s))-f\left(u_{n}\right)\right)\right\|\left\|A^{-\frac{\alpha}{2}}\dot{e}_{n+1}\right\|\mathrm{d}s\right]\\
&~~~~+\mathrm{E}\left[\int^{t_{n+1}}_{t_{n}}\left\|
 \dot{z}(s)-\dot{z}(t_{n+1})\right\|\left\|e_{n+1}\right\|\mathrm{d}s\right]\\
&\lesssim\mathrm{E}\left[\int^{t_{n+1}}_{t_{n}}\left\|A^{\frac{\alpha}{2}}\left(z(s)-z(t_{n+1})\right)\right\|^2\mathrm{d}s\right]+\tau\mathrm{E}\left[\left\|A^{-\frac{\alpha}{2}}\dot{e}_{n+1}\right\|^2\right]\\
&~~~~+\mathrm{E}\left[\int^{t_{n+1}}_{t_{n}}\left\|u(s)-u(t_{n})\right\|^2\mathrm{d}s\right]+\tau\mathrm{E}\left[\left\|u(t_{n})-u_{n}\right\|^2+\left\|A^{-\frac{\alpha}{2}}\dot{e}_{n+1}\right\|^2\right]\\
&~~~~+\mathrm{E}\left[\int^{t_{n+1}}_{t_{n}}\left\|
\dot{z}(s)-\dot{z}(t_{n+1})\right\|^2\mathrm{d}s\right]+\tau\mathrm{E}\left[\left\|e_{n+1}\right\|^2\right]\\
&\lesssim\tau^{1+\frac{2\gamma}{\alpha}}\left(\frac{1}{\varepsilon}+\left\|u_0\right\|^2_{L^2(D,\dot{U}^{\gamma+\alpha})}+\left\|v_0\right\|^2_{L^2(D,\dot{U}^{\gamma})}\right)\\
&~~~~+\tau\mathrm{E}\left[\left\|A^{-\frac{\alpha}{2}}\dot{e}_{n+1}\right\|^2\right]+\tau\mathrm{E}\left[\left\|e_{n+1}\right\|^2\right]\\
\end{split}.
\end{equation*}
Then
\begin{equation}\label{eq:4.0-4}
\begin{split}
&\mathrm{E}\left[\left\|A^{-\frac{\alpha}{2}}\dot{e}_{n+1}\right\|^2\right]+\mathrm{E}\left[\left\|e_{n+1}\right\|^2\right]\\
&\lesssim\tau^{\frac{2\gamma}{\alpha}}\left(\frac{1}{\varepsilon}+\left\|u_0\right\|^2_{L^2(D,\dot{U}^{\gamma+\alpha})}+\left\|v_0\right\|^2_{L^2(D,\dot{U}^{\gamma})}\right)\\
&~~~~+\tau\sum^{n+1}_{i=1}\mathrm{E}\left[\left\|A^{-\frac{\alpha}{2}}\dot{e}_{i}\right\|^2\right]
+\tau\sum^{n+1}_{i=1}\mathrm{E}\left[\left\|e_{i}\right\|^2\right]\\
\end{split}.
\end{equation}
Combining the discrete Gr\"onwall inequality, Eqs. \eqref{eq:4.0-0-2} and \eqref{eq:4.0-4} leads to
\begin{equation*}
\begin{split}
\mathrm{E}\left[\left\|u(t_{n+1})-u_{n+1}\right\|^2\right]&\lesssim\mathrm{E}\left[\left\|e_{n+1}\right\|^2\right]+\frac{1}{\varepsilon}\tau^{\frac{2\gamma}{\alpha}}\\
&\lesssim\tau^{\frac{2\gamma}{\alpha}}\left(\frac{1}{\varepsilon}+\left\|u_0\right\|^2_{L^2(D,\dot{U}^{\gamma+\alpha})}+\left\|v_0\right\|^2_{L^2(D,\dot{U}^{\gamma})}\right)
\end{split}.
\end{equation*}
\end{proof}

\subsection{Higher order time discretization}
As $\gamma>\alpha$, $v(t)$ is H\"older continuous. Then we can design a higher order scheme that has the superlinear convergence rate in time. In this case, the time regularity estimates of $A^{\frac{\alpha}{2}}\dot{z}(t)$ and $\ddot{z}(t)$ imply that using trapezoidal rule, we can get the high accuracy approximations of the integrals in Eq. \eqref{eq:4.0-0}, i.e.,
\begin{equation*}
\int^{t_{n+1}}_ {t_{n}}A^\alpha z(s)\mathrm{d}s\approx \int^{t_{n+1}}_ {t_{n}} A^\alpha \frac{z_{n+1}+z_{n}}{2}\mathrm{d}s
\end{equation*}
and
\begin{equation*}
\int^{t_{n+1}}_ {t_{n}}\dot{z}(s)\mathrm{d}s\approx\int^{t_{n+1}}_ {t_{n}} \frac{\dot{z}_{n+1}+\dot{z}_{n}}{2}\mathrm{d}s.
\end{equation*}
Then, we can design a higher order time discretization of Eq. \eqref{eq:3.7}. Modifying the scheme \eqref{eq:4.0-1} leads to the following equation
\begin{equation}\label{eq:4.0-5}
\left\{
\begin{array}{ll}
\dot{z}_{n+1}-\dot{z}_{n}=-\tau A^\alpha \frac{z_{n+1}+z_{n}}{2}+\tau f(u_{n})+\frac{\tau}{2}\left(f(u_{n})-f(u_{n-1})\right)\\
z_{n+1}-z_{n}=\tau \frac{\dot{z}_{n+1}+\dot{z}_{n}}{2}
\end{array}
\right..
\end{equation}
To obtain the desired convergence rate, we improve the accuracy of approximation for stochastic integral by the following scheme
\begin{equation}\label{eq:stochastic}
\sum^{n}_{i=0}\int^{t_{i+1}}_{t_{i}}\left(A^{-\frac{\alpha}{2}}\sin\left(A^{\frac{\alpha}{2}}(t_{n+1}-t_{i})\right)-(s-t_{i})\cos\left(A^{\frac{\alpha}{2}}(t_{n+1}-t_{i})\right)\right)\mathrm{d}B_H(s).
\end{equation}
Then
\begin{equation}\label{eq:4.0-0-3}
u_{n+1}=z_{n+1}+\sum^{n}_{i=0}\int^{t_{i+1}}_{t_{i}}\left(A^{-\frac{\alpha}{2}}\sin\left(A^{\frac{\alpha}{2}}(t_{n+1}-t_{i})\right)-(s-t_{i})\cos\left(A^{\frac{\alpha}{2}}(t_{n+1}-t_{i})\right)\right)\mathrm{d}B_H(s).
\end{equation}
In \cite{LiuDeng}, the error estimate of the approximation for stochastic
integral is given as
\begin{equation}\label{eq:4.0-0-4}
\begin{split}
&\mathrm{E}\left[\left\|\sum^{n}_{i=0}\int^{t_{i+1}}_{t_{i}}\left(-\int^{s}_{t_{i}}\cos\left(A^{\frac{\alpha}{2}}(t_{n+1}-r)\right)\mathrm{d}r+(s-t_{i})\cos\left(A^{\frac{\alpha}{2}}(t_{n+1}-t_{i})\right)\right)\mathrm{d}B_H(s)\right\|^2\right]\\
&\lesssim\tau^{\min\{\frac{2\gamma}{\alpha},4\}}\sum^\infty_{j=1}\lambda^{\min\{\gamma-\alpha,\alpha\}-2\rho}_j
\end{split}.
\end{equation}
\begin{theorem}\label{th:3}
Let $\dot{z}_{n+1}$ and $z_{n+1}$ be expressed by Eq. \eqref{eq:4.0-5}. Suppose that Assumptions \ref{as:2.1} are satisfied, $\left\|u_0\right\|_{L^p(D,\dot{U}^{\gamma+\alpha})}<\infty$, $\left\|v_0\right\|_{L^p(D,\dot{U}^{\gamma})}<\infty$, $\varepsilon>0$, $\gamma=\alpha+2\rho-\frac{d+\varepsilon}{2}$, and $\gamma>\alpha$. Then
\begin{equation*}
\left\|u_{n+1}\right\|_{L^2(D,U)}\lesssim\left\|A^{-\frac{\alpha}{2}}v_{0}\right\|_{L^2(D,U)}+\left\|u_{0}\right\|_{L^2(D,U)}+1.
\end{equation*}
\end{theorem}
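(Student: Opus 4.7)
The plan is to mirror the structure of the proof of Theorem \ref{th:1}, but replace the backward-Euler style testing by a Crank--Nicolson style testing adapted to the trapezoidal structure of \eqref{eq:4.0-5}. Specifically, I would take the inner product of the first equation of \eqref{eq:4.0-5} with $A^{-\alpha}\frac{\dot{z}_{n+1}+\dot{z}_{n}}{2}$ and of the second equation with $\frac{z_{n+1}+z_{n}}{2}$, so that the elementary identity $(a-b)\frac{a+b}{2}=\frac{1}{2}(a^{2}-b^{2})$ makes both kinetic and potential contributions telescope. The key algebraic cancellation is that $\tau\frac{\dot{z}_{n+1}+\dot{z}_{n}}{2}=z_{n+1}-z_{n}$ by the second line of \eqref{eq:4.0-5}, which turns the cross term $-\tau\langle A^{\alpha}\frac{z_{n+1}+z_{n}}{2},A^{-\alpha}\frac{\dot{z}_{n+1}+\dot{z}_{n}}{2}\rangle$ into $-\frac{1}{2}\bigl(\|z_{n+1}\|^{2}-\|z_{n}\|^{2}\bigr)$.

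After this telescoping, I would reach an identity of the form
\begin{equation*}
\tfrac{1}{2}\bigl(\|A^{-\frac{\alpha}{2}}\dot{z}_{n+1}\|^{2}-\|A^{-\frac{\alpha}{2}}\dot{z}_{n}\|^{2}\bigr)+\tfrac{1}{2}\bigl(\|z_{n+1}\|^{2}-\|z_{n}\|^{2}\bigr)=\tau\Bigl\langle f(u_{n})+\tfrac{1}{2}\bigl(f(u_{n})-f(u_{n-1})\bigr),\,A^{-\alpha}\tfrac{\dot{z}_{n+1}+\dot{z}_{n}}{2}\Bigr\rangle.
\end{equation*}
Taking expectations and invoking Cauchy--Schwarz and Young's inequality, together with the boundedness of $A^{-\alpha/2}$ and the Lipschitz bound of Assumption \ref{as:2.1}, I would estimate the right-hand side by $\tau\bigl(1+\|u_{n}\|_{L^{2}(D,U)}^{2}+\|u_{n-1}\|_{L^{2}(D,U)}^{2}+\|A^{-\frac{\alpha}{2}}\dot{z}_{n+1}\|_{L^{2}(D,U)}^{2}+\|A^{-\frac{\alpha}{2}}\dot{z}_{n}\|_{L^{2}(D,U)}^{2}\bigr)$. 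Summing from $0$ to $n$ telescopes the left side to $\|A^{-\frac{\alpha}{2}}\dot{z}_{n+1}\|_{L^{2}(D,U)}^{2}+\|z_{n+1}\|_{L^{2}(D,U)}^{2}$ plus initial data.

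To close the argument, I would pass from $u_{i}$ to $z_{i}$ using the representation \eqref{eq:4.0-0-3}, so that $\|u_{i}\|_{L^{2}(D,U)}^{2}\lesssim \|z_{i}\|_{L^{2}(D,U)}^{2}+I_{\mathrm{stoch},i}$, where $I_{\mathrm{stoch},i}$ denotes the second moment of the discrete stochastic convolution. The latter can be bounded uniformly by combining Lemma \ref{le:02}, the isometry-type computation used to derive \eqref{eq:3.5-1}, and the decay $|\sigma_{i}|\lesssim\lambda_{i}^{-\rho}$ with Lemma \ref{le:01}; the choice $\gamma=\alpha+2\rho-\frac{d+\varepsilon}{2}>\alpha$ is exactly what makes the resulting series summable. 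This reduces the estimate to
\begin{equation*}
\|A^{-\frac{\alpha}{2}}\dot{z}_{n+1}\|_{L^{2}(D,U)}^{2}+\|z_{n+1}\|_{L^{2}(D,U)}^{2}\lesssim \|A^{-\frac{\alpha}{2}}v_{0}\|_{L^{2}(D,U)}^{2}+\|u_{0}\|_{L^{2}(D,U)}^{2}+1+\tau\sum_{i=0}^{n+1}\bigl(\|A^{-\frac{\alpha}{2}}\dot{z}_{i}\|_{L^{2}(D,U)}^{2}+\|z_{i}\|_{L^{2}(D,U)}^{2}\bigr),
\end{equation*}
to which the discrete Gr\"onwall inequality applies.

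The one step I expect to require some care is treating the extra increment $\frac{\tau}{2}(f(u_{n})-f(u_{n-1}))$: naively bounding $\|u_{n}-u_{n-1}\|$ via the scheme would produce $\tau$-sized terms involving $\dot{z}$, which have to be absorbed without spoiling the Gr\"onwall structure. I plan to sidestep this by splitting $\|u_{n}-u_{n-1}\|\lesssim\|u_{n}\|+\|u_{n-1}\|$ via the triangle inequality, which is wasteful as a consistency estimate but suffices for stability since we only need to bound, not measure an order of convergence. Once this is in place, the derivation reproduces exactly the template of Theorem \ref{th:1}, and the final bound follows by identifying $z_{0}=u_{0}$ and $\dot{z}_{0}=v_{0}$.
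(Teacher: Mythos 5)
Your proposal is correct and follows essentially the same route as the paper: testing the trapezoidal scheme with $A^{-\alpha}(\dot{z}_{n+1}+\dot{z}_{n})$ and $z_{n+1}+z_{n}$ (your factor of $\tfrac12$ is immaterial), exploiting the exact cancellation of the cross terms, bounding the forcing terms including $\tfrac{\tau}{2}(f(u_n)-f(u_{n-1}))$ via Lipschitz continuity and the crude split $\|u_n-u_{n-1}\|\lesssim\|u_n\|+\|u_{n-1}\|$, converting $\|u_i\|$ to $\|z_i\|$ plus a uniformly bounded stochastic convolution, and closing with the discrete Gr\"onwall inequality. Your treatment of the discrete stochastic convolution is somewhat more explicit than the paper's, but the argument is the same.
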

\begin{proof}
Combining weak formulations of Eq. \eqref{eq:4.0-1} and H\"older inequality, we have
\begin{equation}\label{eq:4.0-6}
\begin{split}
\mathrm{E}\left[\left\langle z_{n+1}-z_{n},z_{n+1}+z_{n}\right\rangle\right]=\mathrm{E}\left[\left\langle \tau\frac{\dot{z}_{n+1}+\dot{z}_{n}}{2},z_{n+1}+z_{n}\right\rangle\right]
\end{split}
\end{equation}
and
\begin{equation}\label{eq:4.0-7}
\begin{split}
&\mathrm{E}\left[\left\langle \dot{z}_{n+1}-\dot{z}_{n},A^{-\alpha}\left(\dot{z}_{n+1}+\dot{z}_{n}\right)\right\rangle\right]\\
&=\mathrm{E}\left[\left\langle -\tau A^\alpha \frac{z_{n+1}+z_{n}}{2},A^{-\alpha}\left(\dot{z}_{n+1}+\dot{z}_{n}\right)\right\rangle\right]\\
&~~~~+\mathrm{E}\left[\left\langle \tau f(u_{n}),A^{-\alpha}\left(\dot{z}_{n+1}+\dot{z}_{n}\right)\right\rangle\right]\\
&~~~~+\mathrm{E}\left[\left\langle\frac{\tau}{2}\left(f(u_{n})-f(u_{n-1})\right),A^{-\alpha}\left(\dot{z}_{n+1}+\dot{z}_{n}\right)\right\rangle\right]\\
&\lesssim\mathrm{E}\left[\left\langle -\tau A^\alpha \frac{z_{n+1}+z_{n}}{2},A^{-\alpha}\left(\dot{z}_{n+1}+\dot{z}_{n}\right)\right\rangle\right]\\
&~~~~+\mathrm{E}\left[\tau \left\|f(u_{n})\right\|\left\|A^{-\alpha}\left(\dot{z}_{n+1}+\dot{z}_{n}\right)\right\|\right]\\
&~~~~+\mathrm{E}\left[\frac{\tau}{2}\left\|\left(f(u_{n})-f(u_{n-1})\right)\right\|\left\|A^{-\alpha}\left(\dot{z}_{n+1}+\dot{z}_{n}\right)\right\|\right]\\
&\lesssim\mathrm{E}\left[\left\langle -\tau A^\alpha \frac{z_{n+1}+z_{n}}{2},A^{-\alpha}\left(\dot{z}_{n+1}+\dot{z}_{n}\right)\right\rangle\right]\\
&~~~~+\mathrm{E}\left[\tau \left\|f(u_{n})\right\|^2\right]+\mathrm{E}\left[\tau \left\|A^{-\alpha}\left(\dot{z}_{n+1}+\dot{z}_{n}\right)\right\|^2\right]\\
&~~~~+\mathrm{E}\left[\tau\left\|\left(f(u_{n})-f(u_{n-1})\right)\right\|^2\right]\\
\end{split}.
\end{equation}
Using Eqs. \eqref{eq:4.0-0-1}, \eqref{eq:4.0-6} and \eqref{eq:4.0-7} leads to
\begin{equation*}
\begin{split}
&\mathrm{E}\left[\left\|A^{-\frac{\alpha}{2}}\dot{z}_{n+1}\right\|^2\right]+\mathrm{E}\left[\left\|z_{n+1}\right\|^2\right]\\
&\lesssim\tau\sum^{n+1}_{i=1}\left(\mathrm{E}\left[\left\|f\left(u_{i}\right)\right\|^2\right]+\mathrm{E}\left[\left\|A^{-\frac{\alpha}{2}}\dot{z}_{i}\right\|^2\right]\right)+\mathrm{E}\left[\left\|A^{-\frac{\alpha}{2}}\dot{z}_{0}\right\|^2\right]+\mathrm{E}\left[\left\|z_{0}\right\|^2\right]\\
&\lesssim\tau\sum^{n+1}_{i=1}\left(\mathrm{E}\left[\left\|z_{i}\right\|^2\right]+\mathrm{E}\left[\left\|A^{-\frac{\alpha}{2}}\dot{z}_{i}\right\|^2\right]\right)+\mathrm{E}\left[\left\|A^{-\frac{\alpha}{2}}\dot{z}_{0}\right\|^2\right]+\mathrm{E}\left[\left\|z_{0}\right\|^2\right]+1\\
\end{split}.
\end{equation*}
According to the discrete Gr\"onwall inequality, we get
\begin{equation*}
\begin{split}
\mathrm{E}\left[\left\|A^{-\frac{\alpha}{2}}\dot{z}_{n+1}\right\|^2\right]+\mathrm{E}\left[\left\|z_{n+1}\right\|^2\right]\lesssim&\mathrm{E}\left[\left\|A^{-\frac{\alpha}{2}}\dot{z}_{0}\right\|^2\right]+\mathrm{E}\left[\left\|z_{0}\right\|^2\right]+1\\
\end{split},
\end{equation*}
which shows
\begin{equation*}
\begin{split}
\mathrm{E}\left[\left\|u_{n+1}\right\|^2\right]\lesssim&\mathrm{E}\left[\left\|A^{-\frac{\alpha}{2}}\dot{z}_{0}\right\|^2\right]+\mathrm{E}\left[\left\|z_{0}\right\|^2\right]+1\\
\end{split}.
\end{equation*}
The proof is completed.
\end{proof}

\begin{theorem}\label{th:4}
Let $\dot{z}_{n+1}$ and $z_{n+1}$ be expressed by Eq. \eqref{eq:4.0-5}. Besides Assumptions \ref{as:2.1}, suppose that $ f'(t)<\infty$ and $f'(t)$ satisfies the Lipschitz condition, $\left\|u_0\right\|_{L^p(D,\dot{U}^{\gamma+\alpha})}<\infty$, $\left\|v_0\right\|_{L^p(D,\dot{U}^{\gamma})}<\infty$, $\varepsilon>0$, $\gamma=\alpha+2\rho-\frac{d+\varepsilon}{2}$, and $\gamma>\alpha$. Then
\begin{equation*}
\left\|u(t_{n+1})-u_{n+1}\right\|_{L^2(D,U)}\lesssim\tau^{1+\min\{\frac{\gamma-\alpha}{\alpha},H\}}\left(\sqrt{\frac{1}{\varepsilon}}+\left\|u_0\right\|_{L^2(D,\dot{U}^{\gamma+\alpha})}+\left\|v_0\right\|_{L^2(D,\dot{U}^{\gamma})}\right).
\end{equation*}
\end{theorem}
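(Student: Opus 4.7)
The overall plan is to mirror the argument of Theorem \ref{th:2}, but with the trapezoidal rule in place of the rectangle rule and with Proposition \ref{pro:4} (rather than Proposition \ref{pro:3}) supplying the sharper Hölder regularity of the integrands. Decomposing
\[
u(t_{n+1})-u_{n+1} = \bigl(z(t_{n+1})-z_{n+1}\bigr) + \Bigl(\text{stochastic-quadrature residual from }\eqref{eq:stochastic}\Bigr),
\]
the second piece is already controlled by \eqref{eq:4.0-0-4} at rate $\tau^{\min\{\gamma/\alpha,2\}}=\tau^{1+\min\{(\gamma-\alpha)/\alpha,1\}}$, which (since $H<1$) is at least as small as the claimed rate. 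So everything reduces to estimating $\|e_{n+1}\|_{L^2(D,U)}$ and $\|A^{-\alpha/2}\dot e_{n+1}\|_{L^2(D,U)}$, where $e_{n+1}=z(t_{n+1})-z_{n+1}$ and $\dot e_{n+1}=\dot z(t_{n+1})-\dot z_{n+1}$.

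Subtracting \eqref{eq:4.0-5} from \eqref{eq:4.0-0} and using $z_{n+1}=z(t_{n+1})-e_{n+1}$, one obtains the error system
\begin{align*}
\dot e_{n+1}-\dot e_n &= -\tfrac{\tau}{2}A^\alpha(e_{n+1}+e_n) + R^{(1)}_n + R^{(2)}_n + R^{(3)}_n,\\
e_{n+1}-e_n &= \tfrac{\tau}{2}(\dot e_{n+1}+\dot e_n) + R^{(4)}_n,
\end{align*}
where $R^{(1)}_n$ and $R^{(4)}_n$ are the trapezoidal quadrature residuals for $A^\alpha z$ and $\dot z$ on $[t_n,t_{n+1}]$, the nonlinearity extrapolation residual is $R^{(2)}_n=\int_{t_n}^{t_{n+1}}f(u(s))\,\mathrm{d}s-\tau f(u(t_n))-\tfrac{\tau}{2}(f(u(t_n))-f(u(t_{n-1})))$, and $R^{(3)}_n=\tfrac{3\tau}{2}(f(u(t_n))-f(u_n))-\tfrac{\tau}{2}(f(u(t_{n-1}))-f(u_{n-1}))$ is the solution-error feedback. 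Testing the first identity with $A^{-\alpha}(\dot e_{n+1}+\dot e_n)$ and the second with $e_{n+1}+e_n$, the $-\tfrac{\tau}{2}A^\alpha$ and $\tfrac{\tau}{2}$ cross terms cancel exactly as in the stability proof of Theorem \ref{th:3}, producing the Crank--Nicolson telescoping
\[
\bigl(\|A^{-\alpha/2}\dot e_{n+1}\|^2-\|A^{-\alpha/2}\dot e_n\|^2\bigr) + \bigl(\|e_{n+1}\|^2-\|e_n\|^2\bigr) \lesssim \sum_{j=1}^{4}\bigl|\bigl\langle R^{(j)}_n,\;\cdot\;\bigr\rangle\bigr|,
\]
which Cauchy--Schwarz and Young's inequality split into residual norms plus absorbable $\tau$-multiples of the current errors.

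The trapezoidal residuals are controlled by the standard estimate $\|g(s)-\tfrac{1}{2}(g(t_n)+g(t_{n+1}))\|\lesssim\tau^{1+\beta}$ for $C^{1,\beta}$ integrands, applied with $g=A^{\alpha/2}z$ and $g=\dot z$; Proposition \ref{pro:4} furnishes $\beta=\min\{(\gamma-\alpha)/\alpha,1\}$ and thus $\|A^{-\alpha/2}R^{(1)}_n\|_{L^2}+\|R^{(4)}_n\|_{L^2}\lesssim\tau^{2+\beta}$. The hard step is $R^{(2)}_n$: the scheme uses $f(u_n)+\tfrac12(f(u_n)-f(u_{n-1}))$, which is precisely the linear extrapolation of $F(s)=f(u(s))$ from $\{t_{n-1},t_n\}$ to the midpoint $t_{n+1/2}$. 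Combining a midpoint Taylor expansion of $F$ with the Lagrange mean value theorem applied as $F(t_n)-F(t_{n-1})=f'(u(\theta_n))(u(t_n)-u(t_{n-1}))$ reduces the local truncation to a bound involving the Hölder regularity of $F'=f'(u)\dot u$; Lipschitz continuity of $f'$ together with Lemma \ref{le:1}(ii)--(iii) gives $\dot u\in C^{0,\min\{(\gamma-\alpha)/\alpha,H\}}$, and this is precisely where the $H$ cap enters the final rate. Finally $R^{(3)}_n$ is bounded by $\tau(\|u(t_n)-u_n\|+\|u(t_{n-1})-u_{n-1}\|)$ via Assumption \ref{as:2.1}, which splits into $\tau(\|e_n\|+\|e_{n-1}\|)$ plus the already-controlled stochastic residual. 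Summing, applying discrete Grönwall, and combining with the stochastic-residual bound closes the estimate. The main obstacle I anticipate is precisely the sharp estimate of $R^{(2)}_n$, where one must simultaneously exploit the Lipschitz property of $f'$ and the precise Hölder exponent of $\dot u$ from Lemma \ref{le:1} so that the resulting local truncation is $O(\tau^{2+\min\{(\gamma-\alpha)/\alpha,H\}})$ rather than the looser $O(\tau^{2+\min\{(\gamma-\alpha)/\alpha,1\}})$ one would get from Proposition \ref{pro:4} applied naively.
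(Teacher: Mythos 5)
Your proposal follows essentially the same route as the paper's proof: the same splitting into the $z$-error plus the stochastic quadrature residual bounded by \eqref{eq:4.0-0-4}, the same Crank--Nicolson energy argument tested against $e_{n+1}+e_n$ and $A^{-\alpha}(\dot e_{n+1}+\dot e_n)$ with the symmetric double-integral identity \eqref{eq:4.0-19} and Proposition \ref{pro:4} controlling the trapezoidal residuals, and the same treatment of the nonlinearity extrapolation term via the Lagrange mean value theorem, the Lipschitz continuity of $f'$, and the H\"older regularity of $v=\dot u$ from Lemma \ref{le:1}, which is indeed where the $H$ cap enters. You have also correctly isolated the feedback term $\tfrac{3\tau}{2}(f(u(t_n))-f(u_n))-\tfrac{\tau}{2}(f(u(t_{n-1}))-f(u_{n-1}))$ and its absorption by discrete Gr\"onwall, so the plan matches the paper's argument in all essential respects.
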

\begin{proof}
Using Eqs. \eqref{eq:3.7-3}, \eqref{eq:4.0-0-3} and \eqref{eq:4.0-0-4}, we have
\begin{equation}\label{eq:4.1-9}
\mathrm{E}\left[\left\|u(t_{n+1})-u_{n+1}\right\|^2\right]\lesssim\mathrm{E}\left[\left\|z(t_{n+1})-z_{n+1}\right\|^2\right]+\sqrt{\frac{1}{\varepsilon}}\tau^{\min\{\frac{\gamma}{\alpha},2\}}
\end{equation}
Combining Eqs. \eqref{eq:4.0-0}, \eqref{eq:4.0-5}, H\"older inequality and Proposition \ref{pro:4} leads to
\begin{equation*}
\begin{split}
&\mathrm{E}\left[\left\langle e_{n+1}-e_{n},e_{n+1}+e_{n}\right\rangle\right]\\
&=\mathrm{E}\left[\left\langle\int^{t_{n+1}}_{t_{n}} \left(\dot{z}(s)-\frac{\dot{z}_{n+1}+\dot{z}_{n}}{2}\right)\mathrm{d}s,e_{n+1}+e_{n}\right\rangle\right]\\
&=\mathrm{E}\left[\left\langle\int^{t_{n+1}}_{t_{n}} \left(\dot{z}(s)-\frac{\dot{z}(t_{n+1})+\dot{z}(t_{n})}{2}\right)\mathrm{d}s,e_{n+1}+e_{n}\right\rangle\right]\\
&~~~~+\mathrm{E}\left[\left\langle\int^{t_{n+1}}_{t_{n}}\left(\frac{\dot{z}(t_{n+1})+\dot{z}(t_{n})}{2}-\frac{\dot{z}_{n+1}+\dot{z}_{n}}{2}\right)\mathrm{d}s,e_{n+1}+e_{n}\right\rangle\right]\\
&=\frac{1}{2}\mathrm{E}\left[\left\langle\int^{t_{n+1}}_{t_{n}} \left(\int^{s}_{t_{n}}\ddot{z}(r)\mathrm{d}r-\int^{t_{n+1}}_{s}\ddot{z}(r)\mathrm{d}r\right)\mathrm{d}s,e_{n+1}+e_{n}\right\rangle\right]\\
&~~~~+\mathrm{E}\left[\left\langle\int^{t_{n+1}}_{t_{n}}\frac{\dot{e}_{n+1}+\dot{e}_{n}}{2}\mathrm{d}s,e_{n+1}+e_{n}\right\rangle\right]\\
&=\frac{1}{2}\mathrm{E}\left[\left\langle\int^{t_{n+1}}_{t_{n}} \left(\int^{s}_{t_{n}}\left(\ddot{z}(r)-\ddot{z}(t_n)\right)\mathrm{d}r+\int^{t_{n+1}}_{s}\left(\ddot{z}(t_n)-\ddot{z}(r)\right)\mathrm{d}r\right)\mathrm{d}s,e_{n+1}+e_{n}\right\rangle\right]\\
&~~~~+\mathrm{E}\left[\left\langle\int^{t_{n+1}}_{t_{n}}\frac{\dot{e}_{n+1}+\dot{e}_{n}}{2}\mathrm{d}s,e_{n+1}+e_{n}\right\rangle\right]\\
&\lesssim\mathrm{E}\left[\int^{t_{n+1}}_{t_{n}} \int^{s}_{t_{n}}\tau\left\|\ddot{z}(r)-\ddot{z}(t_n)\right\|^2\mathrm{d}r\mathrm{d}s\right]+\mathrm{E}\left[\int^{t_{n+1}}_{t_{n}}\int^{t_{n+1}}_{s}\tau\left\|\ddot{z}(t_n)-\ddot{z}(r)\right\|^2\mathrm{d}r\mathrm{d}s\right]\\
&~~~~+\mathrm{E}\left[\tau\left\|e_{n+1}+e_{n}\right\|^2\right]+\mathrm{E}\left[\left\langle\int^{t_{n+1}}_{t_{n}}\frac{\dot{e}_{n+1}+\dot{e}_{n}}{2}\mathrm{d}s,e_{n+1}+e_{n}\right\rangle\right]\\
&\lesssim\tau^{3+\min\{\frac{2\gamma-2\alpha}{\alpha},2\}}\left(\frac{1}{\varepsilon}+\left\|u_0\right\|^2_{L^2(D,\dot{U}^{\gamma+\alpha})}+\left\|v_0\right\|^2_{L^2(D,\dot{U}^{\gamma})}\right)\\
&~~~~+\mathrm{E}\left[\tau\left\|e_{n+1}+e_{n}\right\|^2\right]+\mathrm{E}\left[\left\langle\int^{t_{n+1}}_{t_{n}}\frac{\dot{e}_{n+1}+\dot{e}_{n}}{2}\mathrm{d}s,e_{n+1}+e_{n}\right\rangle\right]
\end{split}.
\end{equation*}
For the fourth equality, we use the fact
\begin{equation}\label{eq:4.0-19}
\int^{t_{n+1}}_{t_{n}} \int^{s}_{t_{n}}\mathrm{d}r\mathrm{d}s=\int^{t_{n+1}}_{t_{n}}\int^{t_{n+1}}_{s}\mathrm{d}r\mathrm{d}s.
\end{equation}
Then
\begin{equation}\label{eq:4.0-9}
\begin{split}
&\mathrm{E}\left[\left\|e_{n+1}\right\|^2\right]-\mathrm{E}\left[\left\|e_{n}\right\|^2\right]\\
&\lesssim\tau^{3+\min\{\frac{2\gamma-2\alpha}{\alpha},2\}}\left(\frac{1}{\varepsilon}+\left\|u_0\right\|^2_{L^2(D,\dot{U}^{\gamma+\alpha})}+\left\|v_0\right\|^2_{L^2(D,\dot{U}^{\gamma})}\right)\\
&~~~~+\mathrm{E}\left[\tau\left\|e_{n+1}+e_{n}\right\|^2\right]+\mathrm{E}\left[\left\langle\int^{t_{n+1}}_{t_{n}}\frac{\dot{e}_{n+1}+\dot{e}_{n}}{2}\mathrm{d}s,e_{n+1}+e_{n}\right\rangle\right]
\end{split}.
\end{equation}
Using again Eqs. \eqref{eq:4.0-0}, \eqref{eq:4.0-5} and weak formulation, we have
\begin{equation}\label{eq:4.0-10}
\begin{split}
&\mathrm{E}\left[\left\langle\dot{e}_{n+1}-\dot{e}_{n},A^{-\alpha}\left(\dot{e}_{n+1}+\dot{e}_{n}\right)\right\rangle\right]\\
&=\mathrm{E}\left[\left\langle-\int^{t_{n+1}}_{t_{n}}A^\alpha \left(z(s)-\frac{z_{n+1}+z_{n}}{2}\right)\mathrm{d}s,A^{-\alpha}\left(\dot{e}_{n+1}+\dot{e}_{n}\right)\right\rangle\right]\\
&~~~~+\mathrm{E}\left[\left\langle\int^{t_{n+1}}_{t_{n}}\left(f(u(s))-f(u_{n})\right)\mathrm{d}s-\frac{\tau}{2}\left(f(u_{n})-f(u_{n-1})\right),A^{-\alpha}\left(\dot{e}_{n+1}+\dot{e}_{n}\right)\right\rangle\right]\\
&=J_1+J_2
\end{split}.
\end{equation}
For $J_1$, combining Proposition \ref{pro:4} and Eq. \eqref{eq:4.0-19} leads to
\begin{equation*}
\begin{split}
J_1\\
=&\mathrm{E}\left[\left\langle -\int^{t_{n+1}}_{t_{n}}A^\alpha \left(z(s)-\frac{z(t_{n+1})+z(t_{n})}{2}\right)\mathrm{d}s-\tau A^\alpha\frac{e_{n+1}+e_{n}}{2},A^{-\alpha}\left(\dot{e}_{n+1}+\dot{e}_{n}\right)\right\rangle\right]\\
=&\mathrm{E}\left[\int^{t_{n+1}}_{t_{n}}\int^{s}_{t_{n}}\left\langle A^\frac{\alpha}{2}\frac{\dot{z}(t_{n+1})-\dot{z}(r)}{2},A^{-\frac{\alpha}{2}}\left(\dot{e}_{n+1}+\dot{e}_{n}\right)\right\rangle\mathrm{d}r\mathrm{d}s-\tau\left\langle\frac{e_{n+1}+e_{n}}{2},\dot{e}_{n+1}+\dot{e}_{n}\right\rangle\right]\\
&+\mathrm{E}\left[\int^{t_{n+1}}_{t_{n}} \int^{t_{n+1}}_{s}\left\langle A^\frac{\alpha}{2}\frac{\dot{z}(r)-\dot{z}(t_{n+1})}{2},A^{-\frac{\alpha}{2}}\left(\dot{e}_{n+1}+\dot{e}_{n}\right)\right\rangle\mathrm{d}r\mathrm{d}s\right]\\
\lesssim&\mathrm{E}\left[\int^{t_{n+1}}_{t_{n}}\int^{s}_{t_{n}}\left\| A^\frac{\alpha}{2}\frac{\dot{z}(t_{n+1})-\dot{z}(r)}{2}\right\|\left\|A^{-\frac{\alpha}{2}}\left(\dot{e}_{n+1}+\dot{e}_{n}\right)\right\|\mathrm{d}r\mathrm{d}s-\tau\left\langle\frac{e_{n+1}+e_{n}}{2},\dot{e}_{n+1}+\dot{e}_{n}\right\rangle\right]\\
&+\mathrm{E}\left[\int^{t_{n+1}}_{t_{n}} \int^{t_{n+1}}_{s}\left\| A^\frac{\alpha}{2}\frac{\dot{z}(r)-\dot{z}(t_{n+1})}{2}\right\|\left\|A^{-\frac{\alpha}{2}}\left(\dot{e}_{n+1}+\dot{e}_{n}\right)\right\|\mathrm{d}r\mathrm{d}s\right]\\
\lesssim&\mathrm{E}\left[\tau\int^{t_{n+1}}_{t_{n}}\int^{s}_{t_{n}}\left\| A^\frac{\alpha}{2}\frac{\dot{z}(t_{n+1})-\dot{z}(r)}{2}\right\|^2\mathrm{d}r\mathrm{d}s-\tau\left\langle\frac{e_{n+1}+e_{n}}{2},\dot{e}_{n+1}+\dot{e}_{n}\right\rangle\right]\\
&+\mathrm{E}\left[\tau\int^{t_{n+1}}_{t_{n}} \int^{t_{n+1}}_{s}\left\| A^\frac{\alpha}{2}\frac{\dot{z}(r)-\dot{z}(t_{n+1})}{2}\right\|^2\mathrm{d}r\mathrm{d}s+\tau\left\|A^{-\frac{\alpha}{2}}\dot{e}_{n+1}\right\|^2+\tau\left\|A^{-\frac{\alpha}{2}}\dot{e}_{n}\right\|^2\right]\\
\lesssim&\mathrm{E}\left[\tau^{3+\min\{\frac{2\gamma-2\alpha}{\alpha},2\}}\left(\frac{1}{\varepsilon}+\left\|u_0\right\|^2_{L^2(D,\dot{U}^{\gamma+\alpha})}+\left\|v_0\right\|^2_{L^2(D,\dot{U}^{\gamma})}\right)\right]\\
&-\mathrm{E}\left[\tau\left\langle\frac{e_{n+1}+e_{n}}{2},\dot{e}_{n+1}+\dot{e}_{n}\right\rangle+\tau\left\|A^{-\frac{\alpha}{2}}\dot{e}_{n+1}\right\|^2+\tau\left\|A^{-\frac{\alpha}{2}}\dot{e}_{n}\right\|^2\right]
\end{split}.
\end{equation*}
Let $0<\theta_1,\theta_2,\theta_3<1$, and
\begin{equation*}
\left\{
\begin{array}{ll}
\Psi_1u(t_{n})=(1-\theta_1)u(s)+\theta_1u(t_{n}),\quad t_{n}<s<t_{n+1}\\
\Psi_2u(t_{n})=(1-\theta_2)u(t_{n+1})+\theta_2u(t_{n})
\end{array}
\right..
\end{equation*}
Using Lagrange mean value theorem and Eq. \eqref{eq:4.1-9}, we have
\begin{equation*}
\begin{split}
J_2&=\mathrm{E}\left[\left\langle\int^{t_{n+1}}_{t_{n}}\left(f(u(s))-f(u(t_{n}))\right)\mathrm{d}s-\frac{\tau}{2}\left(f(u(t_{n}))-f(u(t_{n-1}))\right),A^{-\alpha}\left(\dot{e}_{n+1}+\dot{e}_{n}\right)\right\rangle\right]\\
&~~~~+\mathrm{E}\left[\left\langle\frac{3\tau}{2}\left(f(u(t_{n}))-f(u_{n})\right)-\frac{\tau}{2}\left(f(u(t_{n-1}))-f(u_{n-1})\right),A^{-\alpha}\left(\dot{e}_{n+1}+\dot{e}_{n}\right)\right\rangle\right]\\
&=\mathrm{E}\left[\left\langle\int^{t_{n+1}}_{t_{n}}\int^{s}_{t_{n}}f'\left(\Psi_1u(t_{n})\right)v(r)\mathrm{d}r\mathrm{d}s,A^{-\alpha}\left(\dot{e}_{n+1}+\dot{e}_{n}\right)\right\rangle\right]\\
&~~~~-\mathrm{E}\left[\left\langle\int^{t_{n+1}}_{t_{n}}\int^{s}_{t_{n}}f'\left(\Psi_2u(t_{n-1})\right)v\left((1-\theta_3)t_{n}+\theta_3t_{n-1}\right)\mathrm{d}r\mathrm{d}s,A^{-\alpha}\left(\dot{e}_{n+1}+\dot{e}_{n}\right)\right\rangle\right]\\
&~~~~+\mathrm{E}\left[\left\langle\frac{3\tau}{2}\left(f(u(t_{n}))-f(u_{n})\right)-\frac{\tau}{2}\left(f(u(t_{n-1}))-f(u_{n-1})\right),A^{-\alpha}\left(\dot{e}_{n+1}+\dot{e}_{n}\right)\right\rangle\right]\\
&\lesssim \mathrm{E}\left[\left\langle\int^{t_{n+1}}_{t_{n}}\int^{s}_{t_{n}}f'\left(\Psi_1u(t_{n})\right)v(r)\mathrm{d}r\mathrm{d}s,A^{-\alpha}\left(\dot{e}_{n+1}+\dot{e}_{n}\right)\right\rangle\right]\\
&~~~~-\mathrm{E}\left[\left\langle\int^{t_{n+1}}_{t_{n}}\int^{s}_{t_{n}}f'\left(\Psi_2u(t_{n-1})\right)v\left((1-\theta_3)t_{n}+\theta_3t_{n-1}\right)\mathrm{d}r\mathrm{d}s,A^{-\alpha}\left(\dot{e}_{n+1}+\dot{e}_{n}\right)\right\rangle\right]\\
&~~~~+\mathrm{E}\left[\tau\left\|e_{n}\right\|^2+\tau\left\|e_{n-1}\right\|^2+\tau\left\|A^{-\frac{\alpha}{2}}\dot{e}_{n+1}\right\|^2+\tau\left\|A^{-\frac{\alpha}{2}}\dot{e}_{n}\right\|^2\right]+\frac{1}{\varepsilon}\tau^{1+\min\{\frac{2\gamma}{\alpha},4\}}\\
&=III+\mathrm{E}\left[\tau\left\|e_{n}\right\|^2+\tau\left\|e_{n-1}\right\|^2+\tau\left\|A^{-\frac{\alpha}{2}}\dot{e}_{n+1}\right\|^2+\tau\left\|A^{-\frac{\alpha}{2}}\dot{e}_{n}\right\|^2\right]+\frac{1}{\varepsilon}\tau^{1+\min\{\frac{2\gamma}{\alpha},4\}}
\end{split}.
\end{equation*}
Lemma \ref{le:1} implies
\begin{equation*}
\begin{split}
III&\lesssim\mathrm{E}\left[\int^{t_{n+1}}_{t_{n}}\int^{s}_{t_{n}}\left\|\left(|\Psi_1u(t_{n})-\Psi_2u(t_{n-1})|\right)v(r)\right\|\left\|A^{-\alpha}\left(\dot{e}_{n+1}+\dot{e}_{n}\right)\right\|\mathrm{d}r\mathrm{d}s\right]\\
&~~~~+\mathrm{E}\left[\int^{t_{n+1}}_{t_{n}}\int^{s}_{t_{n}}\left\|v(r)-v\left((1-\theta_3)t_{n}+\theta_3t_{n-1}\right)\right\|\left\|A^{-\alpha}\left(\dot{e}_{n+1}+\dot{e}_{n}\right)\right\|\mathrm{d}r\mathrm{d}s\right]\\
&\lesssim\mathrm{E}\left[\int^{t_{n+1}}_{t_{n}}\int^{s}_{t_{n}}\left\|\int^{s}_{t_{n}}v(r_1)\mathrm{d}r_1v(r)\right\|\left\|A^{-\alpha}\left(\dot{e}_{n+1}+\dot{e}_{n}\right)\right\|\mathrm{d}r\mathrm{d}s\right]\\
&~~~~+\mathrm{E}\left[\int^{t_{n+1}}_{t_{n}}\int^{s}_{t_{n}}\left\|\int^{t_{n}}_{t_{n-1}}v(r_1)\mathrm{d}r_1v(r)\right\|\left\|A^{-\alpha}\left(\dot{e}_{n+1}+\dot{e}_{n}\right)\right\|\mathrm{d}r\mathrm{d}s\right]\\
&~~~~+\mathrm{E}\left[\int^{t_{n+1}}_{t_{n}}\int^{s}_{t_{n}}\left\|v(r)-v\left((1-\theta_3)t_{n}+\theta_3t_{n-1}\right)\right\|\left\|A^{-\alpha}\left(\dot{e}_{n+1}+\dot{e}_{n}\right)\right\|\mathrm{d}r\mathrm{d}s\right]\\
&\lesssim\mathrm{E}\left[\tau^5\left(\left\|u_0\right\|^4+\left\|v_0\right\|^4+1\right)+\tau^{3+\min\{2\frac{\gamma-\alpha}{\alpha},2H\}}\left(\left\|u_0\right\|^2+\left\|v_0\right\|^2+\frac{1}{\varepsilon}\right)\right]\\
&~~~~+\mathrm{E}\left[\tau\left\|A^{-\frac{\alpha}{2}}\dot{e}_{n+1}\right\|^2+\tau\left\|A^{-\frac{\alpha}{2}}\dot{e}_{n}\right\|^2\right].
\end{split}.
\end{equation*}
Combining $J_1$, $J_2$, Eqs. \eqref{eq:4.0-9} and \eqref{eq:4.0-10} leads to
\begin{equation*}
\begin{split}
&\mathrm{E}\left[\left\|e_{n+1}\right\|^2\right]-\mathrm{E}\left[\left\|e_{n}\right\|^2\right]+\mathrm{E}\left[\left\|A^{-\frac{\alpha}{2}}\dot{e}_{n+1}\right\|^2\right]-\mathrm{E}\left[\left\|A^{-\frac{\alpha}{2}}\dot{e}_{n}\right\|^2\right]\\
&\lesssim\tau^{3+\min\{\frac{2\gamma-2\alpha}{\alpha},2H\}}\left(\frac{1}{\varepsilon}+\left\|u_0\right\|^2_{L^2(D,\dot{U}^{\gamma+\alpha})}+\left\|v_0\right\|^2_{L^2(D,\dot{U}^{\gamma})}\right)\\
&~~~~+\tau\mathrm{E}\left[\left\|e_{n}\right\|^2+\left\|e_{n-1}\right\|^2+\left\|A^{-\frac{\alpha}{2}}\dot{e}_{n+1}\right\|^2+\left\|A^{-\frac{\alpha}{2}}\dot{e}_{n}\right\|^2\right]
\end{split},
\end{equation*}
which implies
\begin{equation*}
\begin{split}
&\mathrm{E}\left[\left\|e_{n+1}\right\|^2\right]+\mathrm{E}\left[\left\|A^{-\frac{\alpha}{2}}\dot{e}_{n+1}\right\|^2\right]\\
&\lesssim\tau^{2+\min\{\frac{2\gamma-2\alpha}{\alpha},2H\}}\left(\frac{1}{\varepsilon}+\left\|u_0\right\|^2_{L^2(D,\dot{U}^{\gamma+\alpha})}+\left\|v_0\right\|^2_{L^2(D,\dot{U}^{\gamma})}\right)\\
\end{split}.
\end{equation*}
Then, using above equation and Eq. \eqref{eq:4.1-9}, we have
\begin{equation*}
\begin{split}
\mathrm{E}\left[\left\|u(t_{n+1})-u_{n+1}\right\|^2\right]&\lesssim\tau^{2+\min\{\frac{2\gamma-2\alpha}{\alpha},2H\}}\left(\frac{1}{\varepsilon}+\left\|u_0\right\|^2_{L^2(D,\dot{U}^{\gamma+\alpha})}+\left\|v_0\right\|^2_{L^2(D,\dot{U}^{\gamma})}\right)
\end{split}.
\end{equation*}
\end{proof}

\section{Numerical Experiments}\label{sec:6}

In order to verify the theoretical results, we perform several numerical examples in this section. All numerical errors are given in the sense of mean-squared $L^2$-norm.

We solve \eqref{eq:1.05} in the one-dimensional domain $D=(0,1)$ by the proposed schemes \eqref{eq:4.0-1} and \eqref{eq:4.0-5} with the smooth initial data $u_0=\frac{1}{\sqrt{2}}\sin(2\pi x)$, $v_0=\frac{1}{2\sqrt{2}}\sin(3\pi x)$, the time mesh size $\tau=\frac{T}{N}$ and $f(u(t))=\sin(u(t))$. And using the modified spectral Galerkin approximation \cite{LiuDeng} discretize the problem \eqref{eq:1.05} in space with $\{\lambda_{i},\phi_{i}\}^M_{i=1}$, which are Dirichlet eigenpairs of Laplacian in $D=(0,1)$. Let $u_N$ be the numerical solution of the fully discrete scheme. To calculate the time convergence orders, the following formulas are used
\begin{equation*}
 \frac{\ln\left(\left\|u_{aN}-u_{N}\right\|_{L^2(D,U)}/
\left\|u_{N}-u_{N/a}\right\|_{L^2(D,U)}\right)}{\ln a},
\end{equation*}
where the constant $a>1$.
In numerical simulations, $1000$ samples are used for the approximation of the expected
values $\mathrm{E}\left[\left\|u_{aN}-u_{N}\right\|^2\right]$. For each sample, we generate $M$ independent fractional Brownian motions $\beta^i_H(t)$, $i=1,2,\dots,M$.

\subsection{Low order case}
As $0<\gamma\le\alpha$, we use Eqs. \eqref{eq:4.0-1} and \eqref{eq:4.0-0-1} to discretize the problem \eqref{eq:1.05} in time. Choose $\rho=0.25$ and three values of $\alpha\in(0,1]$, i.e., $\alpha=0.6,0.8,1$. Then, Theorem \ref{th:2} implies the convergence rates are close to $1$. Taking $M=1600$ guarantee that the dominant errors arise from the temporal approximation. For $H=0.8$, the following numerical results confirm the error estimates in Theorem \ref{th:2}.
\begin{table}[htp]
\renewcommand\arraystretch{1.6}
\caption{Time convergence rates with $M=1600$, $T=0.5$, $H=0.8$, and $\rho=0.25$.}\label{table:1}
\centering
\begin{tabular}{c c c c c c c c }
\hline
$N$ & $\alpha=0.6$ & Rate &$\alpha=0.8$ &Rate & $\alpha=1$ &Rate  \\
\hline
  32&1.826e-03&     & 1.028e-02&     &3.440e-02&          \\
  64&9.441e-04& 0.952& 5.051e-03& 1.025&1.828e-02&0.912\\
  128&4.829e-04& 0.967& 2.500e-03& 1.015&9.416e-03&0.957  \\
  \hline
\end{tabular}
\end{table}

\subsection{Higher order case}
For $\gamma>\alpha$, using Eqs. \eqref{eq:4.0-5} and \eqref{eq:4.0-0-3} solves numerically the problem \eqref{eq:1.05}. The simulation of the approximation \eqref{eq:stochastic} for the stochastic integral can be found in \cite{LiuDeng}.

%\begin{table}[H]
%\renewcommand\arraystretch{1.6}
%\caption{Time convergence rates with $N=2000$, $T=1$, $H=0.9$, and $\alpha=0.4$.}\label{table:2}
%\centering
%\begin{tabular}{c c c c c c c c }
%\hline
%$M$ & $\rho=0.35$ & Rate &$\rho=0.4$ &Rate & $\rho=0.45$ &Rate  \\
%\hline
%  4&1.796e-03&     & 1.027e-02&     &9.757e-02&          \\
%  8&9.410e-04& 0.933& 5.051e-03& 1.024&4.274e-02&1.191\\
%  16&4.829e-04& 0.963& 2.500e-03& 1.015&9.416e-03&0.958  \\
%  \hline
%\end{tabular}
%\end{table}
% As $\alpha<\gamma<H\alpha+\alpha$, the theoretical convergence rate
%is approximately $1+\frac{\gamma-\alpha}{\alpha}$. Table \ref{table:2} shows that the numerical results coincide with theoretical estimates.

\begin{figure}[htb]
  \centering
  \label{fig:a}\includegraphics[scale=0.6]{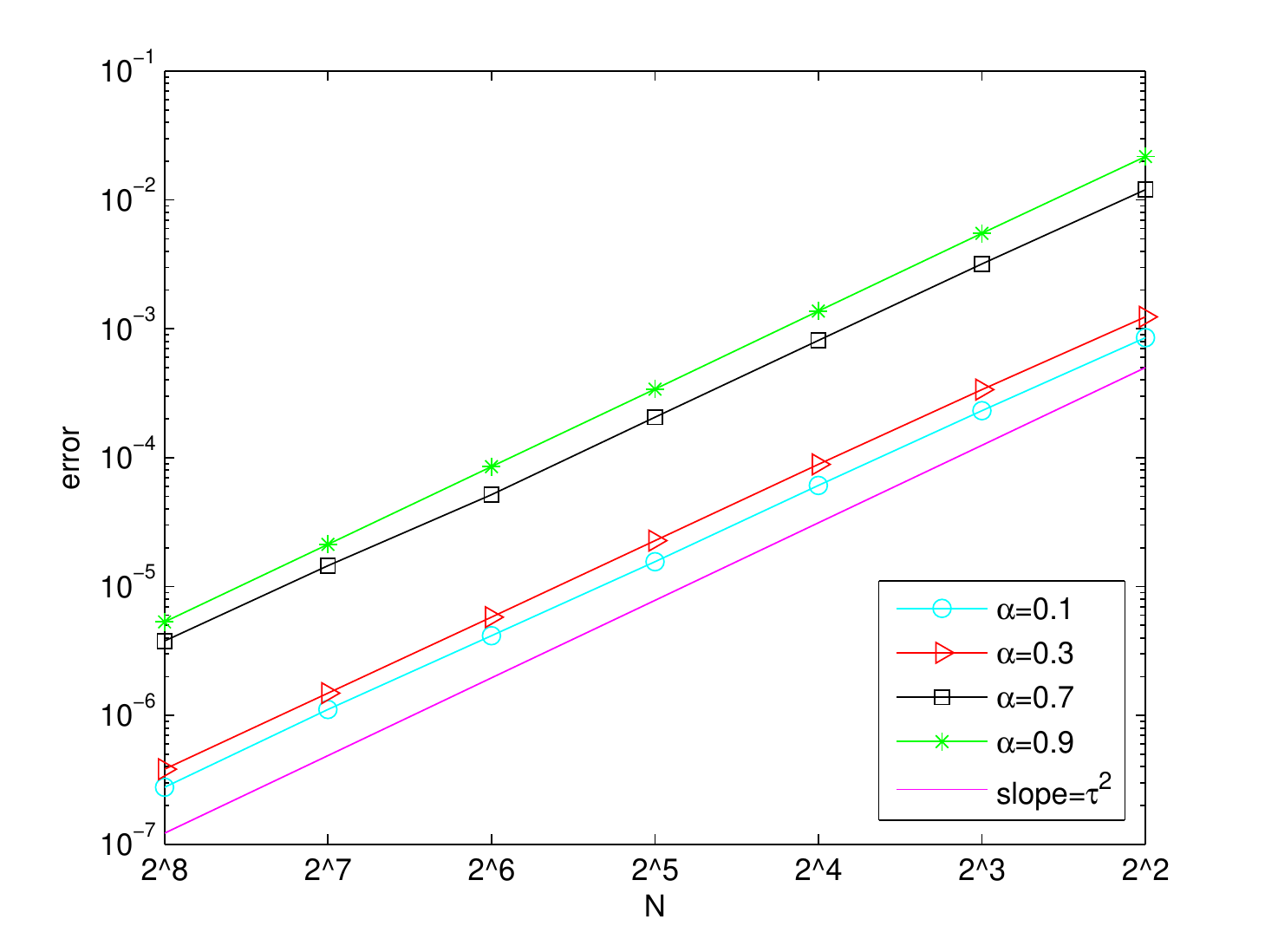}
  \caption{Temporal error convergence of the modified stochastic trigonometric method for the space-time fractional Gaussian noise $(H=0.6)$.}
  \label{fig:fig1}
\end{figure}

The numerical results are presented in Figures \ref{fig:fig1} and \ref{fig:fig2} with $T=0.5$, $\rho=1.5$ and $M=200$, which ensures the temporal error is the dominant one.  Figures \ref{fig:fig1} and \ref{fig:fig2} show that the temporal convergence rates have at least an order of $1+\min\{\frac{\gamma-\alpha}{\alpha},H\}$ by using the proposed scheme, which in turn justifies error estimates of Theorem \ref{th:4}.

\begin{figure}[htb]
  \centering
  \label{fig:b}\includegraphics[scale=0.6]{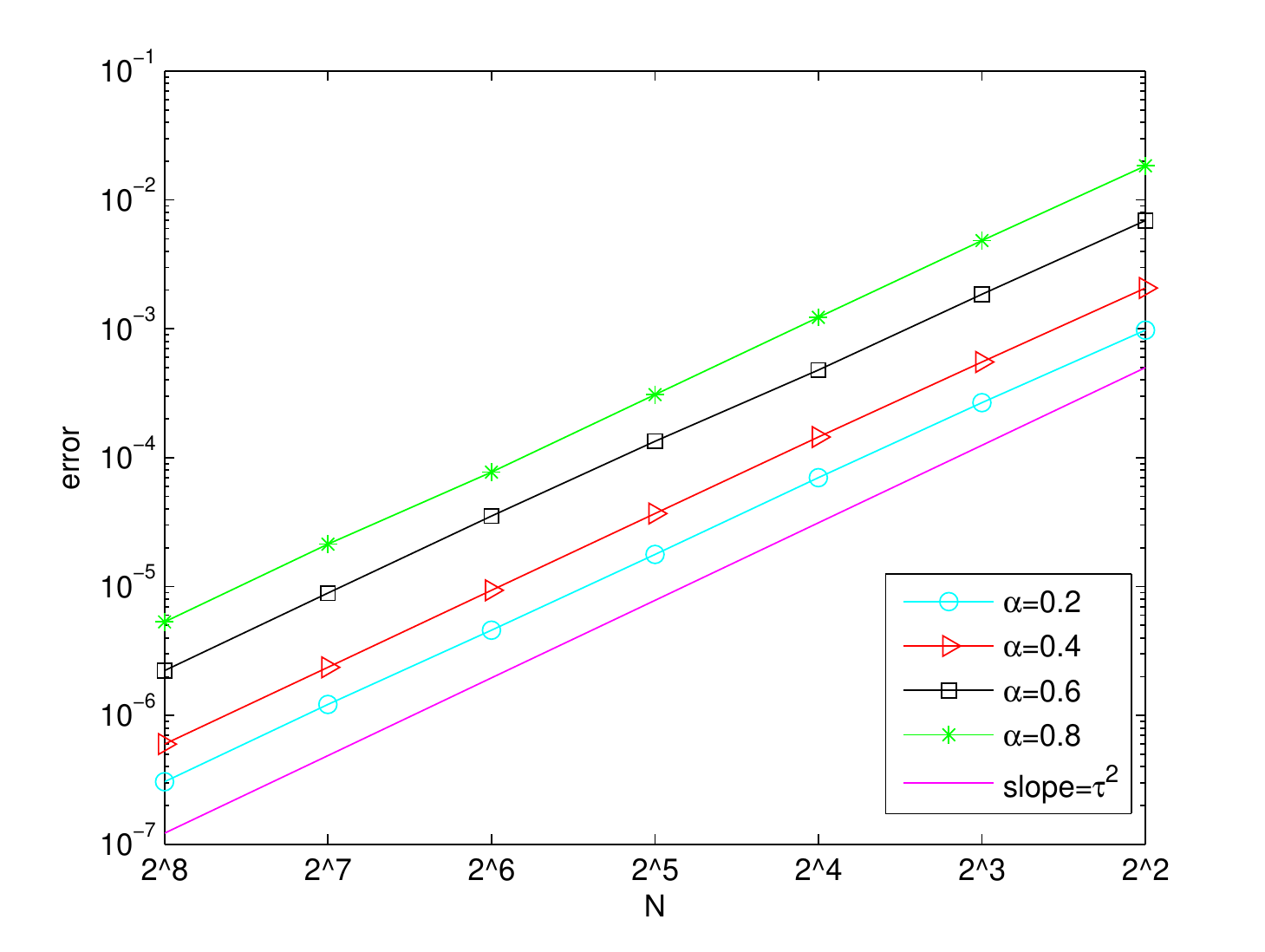}
  \caption{Temporal error convergence of the modified stochastic trigonometric method for the space-time fractional Gaussian noise $(H=0.8)$.}
  \label{fig:fig2}
\end{figure}

\section{Conclusion}\label{sec:7}

This paper discusses the time discretization of the equation describing the wave propagation with additive fractional Gaussian noise and the corresponding error analyses. Based on the temporal regularity of $u(t)$, we design two numerical difference methods to discretize the problem \eqref{eq:1.05}. When the solution is H\"older continuous in the sense of mean-squared $L^2$-norm $(0<\gamma\le\alpha)$, using rectangle formula scheme leads to the low order time discretization which has order $\frac{\gamma}{\alpha}$. If the first order derivative of the solution with respect to time is H\"older continuous, a higher order time discretization is proposed by using trapezoidal rule. Then the superlinear convergence is obtained under the mean-squared $L^2$-norm.

\noindent {\bf Acknowledgments.} The author gratefully thank the anonymous referees for valuable comments and suggestions
in improving this paper.


\begin{thebibliography}{10}

%\bibitem{Knuth}Donald E. Knuth, \emph{The TeXbook},
%Addison-Wesley, 1992.

\bibitem{AlosMazet}E.~Al\`os, O.~Mazet, D.~Nualart, Stochastic calculus with respect to Gaussian processes, \emph{Ann. Probab.}, \textbf{29}:2 (2001), 766-801.

\bibitem{Anton}R.~Anton, D.~Cohen, S.~Larsson, X.~Wang, Full discretization of semilinear stochastic wave equations driven by multiplicative noise, \emph{SIAM J. Numer. Anal.}, \textbf{54}:2 (2016), 1093-1119.

\bibitem{Bueno}A.~Bueno-Orovio, D.~Kay, V.~Grau, B.~Rodriguez, K.~Burrage, Fractional diffusion models of cardiac electrical propagation: Role of structural heterogeneity in dispersion of repolarization, \emph{J. R. Soc. Interface.}, \textbf{11}:97 (2014), 20140352.

\bibitem{CaoHong}Y.Z.~Cao, J.L.~Hong, Z.H.~Liu, Approximating stochastic evolution equations with additive white and rough noises, \emph{SIAM J. Numer. Anal.}, \textbf{55}:4 (2017), 1958-1981.

\bibitem{ChenHolm}W.~Chen, S.~Holm, Fractional Laplacian time-space models for linear and nonlinear lossy media exhibiting arbitrary frequency power-law dependency, \emph{J. Acoust. Soc. Am.}, \textbf{115}:4 (2004), 1424-1430.

\bibitem{Cohen}D.~Cohen, S.~Larsson, M.~Sigg, A trigonometric method for the linear stochastic wave equation, \emph{SIAM J. Numer. Anal.}, \textbf{51}:1 (2013), 204-222.

\bibitem{CohenQuer}D.~Cohen, L.~Quer-Sardanyons, A fully discrete approximation of the one-dimensional stochastic wave equation, \emph{IAM J. Numer. Anal.}, \textbf{36}:1 (2016), 400-420.

\bibitem{Laptev}A.~Laptev, Dirichlet and Neumann eigenvalue problems on domains in Euclidean spaces, \emph{J. Funct. Anal.}, \textbf{151}:2 (1997), 531-545.

\bibitem{LiYau}P.~Li, S.T.~Yau, On the Schr\"odinger equation and the eigenvalue problem, \emph{Comm. Math. Phys.}, \textbf{88}:3 (1983), 309-318.

\bibitem{LiWang}Y.J.~Li, Y.J.~Wang, W.H.~Deng, Galerkin finite element approximations for stochastic space-time fractional wave equations, \emph{SIAM J. Numer. Anal.}, \textbf{55}:6 (2017), 3173-3202.

\bibitem{Liu}X.~Liu, W.H.~Deng, Numerical approximation for fractional diffusion equation forced by a tempered fractional Gaussian noise, \emph{J. Sci. Comput.}, \textbf{84}:1 (2020), 1-28.

\bibitem{LiuDeng}X.~Liu, W.H.~Deng, Higher order approximation for stochastic space fractional wave equation forced by an additive space-time Gaussian noise, \emph{J. Sci. Comput.}, \textbf{87}:1 (2021), 1-29.

\bibitem{MeerschaertSchilling}M.M.~Meerschaert, R.L.~Schilling, A.~Sikorskii, Stochastic solutions for fractional wave equations, \emph{Nonlinear Dyn.}, \textbf{80}:4 (2015), 1685-1695.

\bibitem{Mishura}Y.S.~Mishura, \emph{Stochastic Calculus for Fractional Brownian Motion and Related Processes}, Springer, Berlin, 2008.

\bibitem{Kovacs}M.~Kov\'acs, A.~Lang, A.~Petersson, Weak convergence of fully discrete finite element approximations of semilinear hyperbolic SPDE with additive noise, \emph{ESAIM Math. Model. Numer. Anal.}, \textbf{54}:6 (2020), 2199-2227.

\bibitem{KovacsLar}M.~Kov\'acs, S.~Larsson, F.~Lindgren, Weak convergence of finite element approximations of linear stochastic evolution equations with additive noise II. Fully discrete schemes, \emph{ESAIM Math. Bit Numer. Math.}, \textbf{53}:2 (2013), 497-525.

\bibitem{Nochetto}R.H.~Nochetto, E.~Ot\'arola, A.J.~Salgado, A PDE approach to fractional diffusion in general domains: a priori error analysis, \emph{Found Comput Math.}, \textbf{15}:3 (2015), 733-791.

\bibitem{PratoZab}G.D.~Prato, J.~Zabczyk, \emph{Stochastic Equations in Infinite Dimensions}, 2nd edn, Cambridge University Press, Cambridge, 2014.

\bibitem{Song}R.~Song, Z.~Vondra$\mathrm{\breve{c}}$ek, Potential theory of subordinate killed Brownian motion in a domain, \emph{Probab Theory Relat Fields}, \textbf{125}:4 (2003), 578-592.

\bibitem{Strauss}W.A.~Strauss, \emph{Partial Differential Equations: An Introduction}, Wiley, New York, 2008.

\bibitem{Szabo}T.L.~Szabo, Time domain wave equations for lossy media obeying a frequency power law, \emph{J. Acoust. Soc. Am.}, \textbf{96}:1 (1994), 491-500.

\bibitem{vanNeerven}J.M.A.M.~van Neerven, M.C.~Veraar, L.~Weis, Stochastic integration in UMD Banach spaces, \emph{Ann. Probab.}, \textbf{35}:4 (2007), 1438-1478.

\bibitem{WangGan}X.~Wang, S.~Gan, J.~Tang, Higher order strong approximations of semilinear stochastic wave equation with additive space-time white noise, \emph{SIAM J. Sci. Comput.}, \textbf{36}:6 (2014), A2611-A2632.

\bibitem{WangZeng}G.~Wang, M.~Zeng, B.~Guo, Stochastic Burgers equation driven by fractional Brownian motion, \emph{J. Math. Anal. Appl.}, \textbf{371}:1 (2010), 210-222.



\end{thebibliography}
\end{document}